\newtheorem{theorem}{Theorem}[section]
\newtheorem{lemma}[theorem]{Lemma}
\newtheorem{corollary}[theorem]{Corollary}
\newtheorem{proposition}[theorem]{Proposition}
\theoremstyle{definition}
\newtheorem{definition}[theorem]{Definition}
\newtheorem{remark}[theorem]{Remark}
\newtheorem{conjecture}[theorem]{Conjecture}
\newcommand{\xysquare}[8]{
\[\xymatrix{
#1 \ar@{#5}[r] \ar@{#6}[d] & #2 \ar@{#7}[d]\\
#3 \ar@{#8}[r] & #4
}\]
}
\newcommand{\bb}{\mathbb}
\newcommand{\bor}{\partial}
\newcommand{\comment}[1]{}
\newcommand{\into}{\hookrightarrow}
\newcommand{\isoto}{\stackrel{\simeq}{\to}}
\newcommand{\Isoto}{\stackrel{\simeq}{\longrightarrow}}
\newcommand{\Mapsto}{\longmapsto}
\newcommand{\onto}{\twoheadrightarrow}
\newcommand{\Onto}{-\!\!\!\to\!\!\!\!\to}
\newcommand{\op}{\operatorname}
\newcommand{\pid}[1]{\langle #1 \rangle}
\newcommand{\roi}{\mathcal{O}}
\newcommand{\sub}[1]{{\mbox{\scriptsize #1}}}
\newcommand{\To}{\longrightarrow}
\newcommand{\ul}[1]{\underline{#1}}
\newcommand{\xto}{\xrightarrow}
\renewcommand{\cal}{\mathcal}
\renewcommand{\frak}{\mathfrak}
\newcommand{\indlim}{\varinjlim}
\renewcommand{\tilde}{\widetilde}
\renewcommand{\ker}{\operatorname{Ker}}
\renewcommand{\projlim}{\varprojlim}
\DeclareMathOperator{\codim}{codim}
\DeclareMathOperator{\Hom}{Hom}
\DeclareMathOperator{\Spec}{Spec}
\DeclareMathOperator{\Tor}{Tor}
\newcommand{\CH}{C\!H}
\newcommand{\BS}{B\!S}
\newcommand\org@hypertarget{}
\let\org@hypertarget\hypertarget
\renewcommand\hypertarget[2]{%
  \Hy@raisedlink{\org@hypertarget{#1}{}}#2%
} 
\begin{document}
\title{\vspace{-1cm}Zero cycles on singular varieties and their desingularisations}
\author{Matthew Morrow}
%\classno{}
\date{}
%\extraline{}

\maketitle

\begin{abstract}
We use pro cdh-descent of $K$-theory to study the relationship between the zero cycles on a singular variety $X$ and those on its desingularisation $X'$. We prove many cases of a conjecture of S.~Bloch and V.~Srinivas, and relate the Chow groups of $X$ to the Kerz--Saito Chow group with modulus of $X'$ relative to its exceptional fibre.

%Key words: $K$-theory, excision, descent, cyclic homology, zero cycles.
%MSC: 19D55 (primary), 13D03 14J17 14C35 (secondary).
\end{abstract}

\thispagestyle{empty}

\setcounter{section}{-1}
\section{Introduction}
Let $X'\to X$ be a desingularisation of a $d$-dimensional, integral variety over a field $k$, with exceptional fibre $E\into X$. Letting $rE$ denote the $r^\sub{th}$ infinitesimal thickening of $E$, we denote by $F^dK_0(X',rE)$ the subgroup of the relative $K$-group $K_0(X',rE)$ generated by the cycle classes of closed points of $X'\setminus E$, for each $r\ge1$. This inverse system \[F^dK_0(X',E)\longleftarrow F^dK_0(X',2E)\longleftarrow F^dK_0(X',3E)\longleftarrow\cdots\] was first studied by S.~Bloch and V.~Srinivas \cite{Srinivas1985a}, in the case of normal surfaces, as a means of relating zero cycles on the singular variety $X$ to zero cycles on the smooth variety $X'$. They conjectured [pg.~6, op.~cit.] in 1985 that this inverse system would eventually stabilise, i.e., $F^dK_0(X',rE)\isoto F^dK_0(X',(r-1)E)$ for $r\gg1$, with stable value equal to $F^dK_0(X)$, the subgroup of $K_0(X)$ generated by cycle classes of smooth, closed points of $X$.

The Bloch--Srinivas conjecture was proved for normal surfaces by A.~Krishna and Srinivas \cite[Thm.~1.1]{Krishna2002}, and later extended to higher dimensional, Cohen--Macaulay varieties with isolated singularities in characteristic zero by Krishna \cite[Thm.~1.1]{Krishna2006} \cite[Thm.~1.2]{Krishna2010}. The conjecture has not been previously verified in any case of non-isolated singularities, nor for any higher dimensional varieties in finite characteristic.

The primary goal of this paper is to prove the following cases of the Bloch--Srinivas conjecture for varieties which are regular in codimension one:

\begin{theorem}\label{theorem_intro}
Let $\pi:X'\to X$ be a desingularisation of a $d$-dimensional, quasi-projective, integral variety $X$ over an infinite, perfect field $k$ which is assumed to have strong resolution of singularities. Let $E\into X$ be a closed embedding covering the exceptional fibre, and assume that $\codim(X,\pi(E))\ge2$.

Then the associated Bloch--Srinivas conjecture is
\begin{enumerate}\itemsep0pt
\item true up to $(d-1)!$-torsion;
\item true if $X$ is projective, $k=k^\sub{alg}$, and $\op{char}k=0$;
\item true if $X$ is projective, $k=k^\sub{alg}$, and $d\le\op{char}k\neq0$;
\item true if $X$ is affine and $k=k^\sub{alg}$;\label{cone}
\item true ``up to a finite group'' if $k=k^\sub{alg}$ and $X_\sub{sing}$ is contained in an affine open of~$X$;
\item true if $\pi(E)$ is finite;
\item true if the cycle class map $\CH_0(X)\to F^dK_0(X)$ is an isomorphism.
\end{enumerate}
\end{theorem}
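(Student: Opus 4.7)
The overall strategy is to exploit pro cdh-descent of $K$-theory applied to the abstract blow-up square formed by $\pi\colon X'\to X$ and $Y := \pi(E) \subseteq X$. Pro cdh-descent produces a Mayer--Vietoris long exact sequence of pro-abelian groups relating $K_*(X)$, $K_*(X')$, $K_*(Y)$ and $\{K_*(rE)\}_r$; in relative form this should specialise to a pro-isomorphism between $\{K_*(X',rE)\}_r$ and $\{K_*(X,rY)\}_r$. This is the engine that converts the geometry of the desingularisation into a direct comparison between singular and smooth $K$-theory.

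The first concrete step is to cut down to the $F^d$-piece of the pro Mayer--Vietoris sequence, namely the subgroups generated by cycle classes of closed points missing the exceptional set. The codimension hypothesis $\operatorname{codim}(X,Y)\ge 2$ forces $\dim Y \le d-2$, so for dimensional reasons the zero-cycle contributions coming from $Y$ and $E$ and their infinitesimal thickenings vanish. This should restrict the sequence to a pro-isomorphism
\[
\{F^d K_0(X', rE)\}_r \;\isoto\; F^d K_0(X),
\]
which is precisely the Bloch--Srinivas conjecture at the level of pro-systems.

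The main obstacle is to upgrade this pro-isomorphism to the \emph{eventual stabilisation} claimed by the conjecture: a pro-isomorphism of abelian groups only controls the inverse limit, whereas stabilisation requires that the individual transition maps $F^d K_0(X', rE) \to F^d K_0(X', (r-1)E)$ eventually become bijective. The hypotheses of (i)--(vii) are calibrated to close this gap in each case. Part (vii) follows directly from the pro-isomorphism together with the hypothesis $\CH_0(X)\isoto F^d K_0(X)$ and the Kerz--Saito Chow-with-modulus comparison for $X'$ relative to $rE$ promised in the abstract. Part (vi) reduces to an Artinian situation, where length considerations give stabilisation by hand. The projective algebraically closed cases (ii) and (iii) should rest on Roitman-type theorems and finite-generation of the Albanese kernel, while the affine case (iv) together with its mixed analogue (v) use corresponding finiteness for smooth affine varieties, the ``up to a finite group'' weakening in (v) reflecting the interaction between the affine piece containing $X_\sub{sing}$ and its projective completion. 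Finally, rationally (part (i)), the denominators tracking the weight filtration in a Chern-character argument produce the $(d-1)!$-torsion bound.
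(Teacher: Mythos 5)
Your overall framing is right: the paper's proof does run through pro cdh-descent for the abstract blow-up square formed by $X'\to X$ and $Y:=\pi(|E|)_\sub{red}$. But there is a genuine gap at the central step. Pro cdh-descent gives the pro-isomorphism $\{K_0(X,rY)\}_r\cong\{K_0(X',rE)\}_r$, and hence (since both sides are generated by classes of closed points of $X\setminus Y=X'\setminus E$) the pro-isomorphism $\{F^dK_0(X,rY)\}_r\cong\{F^dK_0(X',rE)\}_r$. It does \emph{not} give your claimed pro-isomorphism $\{F^dK_0(X',rE)\}_r\isoto F^dK_0(X)$, and the assertion that the contributions of $Y$ and its thickenings ``vanish for dimensional reasons'' because $\dim Y\le d-2$ is unjustified: the obstruction to $F^dK_0(X,rY)\to F^dK_0(X)$ being injective is the intersection of $F^dK_0(X,rY)$ with the image of the boundary map $\partial:K_1(rY)\to K_0(X,rY)$ (essentially $\partial(SK_1(rY))$), which has nothing to do with the dimension of $Y$. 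Indeed, if your pro-isomorphism held unconditionally, then --- since all the groups surject compatibly onto $F^dK_0(X)$ --- a formal argument would force $F^dK_0(X',rE)\isoto F^dK_0(X)$ for every $r$, so the conjecture would hold with no hypotheses beyond $\codim\ge2$ and cases (i)--(vii) would be vacuous. Relatedly, you mislocate the residual difficulty: upgrading a pro-isomorphism onto a \emph{constant} system with surjective structure maps to genuine stabilisation is formal; the real work is proving injectivity of $F^dK_0(X,rY)\to F^dK_0(X)$.

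The paper closes this gap by routing through the Levine--Weibel Chow group of the \emph{singular} variety $X$: one has a surjection $\CH_0(X;|Y|)\onto F^dK_0(X,rY)$ and an isomorphism $\CH_0(X;|Y|)\isoto\CH_0(X)$ (moving lemma, using $\codim(X,|Y|)\ge2$ and quasi-projectivity), so whenever the cycle class map $\CH_0(X)\to F^dK_0(X)$ is injective, the map $F^dK_0(X,rY)\to F^dK_0(X)$ is forced to be an isomorphism. Cases (ii)--(iv) then follow from the theorems of Levine, Srinivas and Barbieri Viale on this cycle class map for projective or affine varieties over algebraically closed fields (not from Roitman-type statements about the smooth $X'$); case (i) follows from Levine's Chern class $ch_0$ with $ch_0\circ[\;\;]=(-1)^{d-1}(d-1)!$; case (v) from finiteness of the kernel of the cycle class map together with a stabilising tower of finite quotients; and case (vi) from Krishna's lemma for the zero-dimensional scheme $rY$. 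Your sketches of the individual cases would need to be rebuilt around this mechanism; in particular, (vii) is a direct diagram chase and does not use Chow groups with modulus --- in the paper the logic runs the other way, the modulus results being deduced from the Bloch--Srinivas statements.
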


The group $\CH_0(X)$ appearing in part (vii) of Theorem \ref{theorem_intro} is the Levine--Weibel Chow group of zero cycles of the singular variety $X$ \cite{Levine1985b, Levine1985a}; it will be reviewed in Section \ref{definition_LW}.

Part (iv) of the Theorem, combined with arguments of Krishna \cite{Krishna2010} and R.~Murthy \cite{Murthy1994}, has concrete applications to Chow groups of cones and to the structure of modules and ideals of graded algebras; see Theorem \ref{theorem_Srinivas_2} and Corollaries \ref{corol_affine0} and \ref{corol_affine}.

This paper is intended partly to justify the author's pro cdh-descent theorem for $K$-theory \cite{Morrow_pro_cdh_descent}; indeed, the results of Theorem \ref{theorem_intro} are obtained in Section \ref{subsection_BS} as corollaries of the following general result, which itself is an immediate consequence of pro cdh-descent:

\begin{theorem}
Let $\pi:X'\to X$ be a desingularisation of a $d$-dimensional, quasi-projective, integral variety over an infinite, perfect field $k$ which is assumed to have strong resolution of singularities. Let $E\into X$ be a closed embedding covering the exceptional fibre. Then:
\begin{enumerate}
\item There exists a unique homomorphism $\BS_r:F^dK_0(X',rE)\to F^dK_0(X)$ for ${r\gg1}$ which is compatible with cycle classes of closed points.
\item The associated Bloch--Srinivas conjecture is true if and only if the canonical map $F^dK_0(X,rY)\to F^dK_0(X)$ is an isomorphism for $r\gg1$, where $Y:=\pi(E)_\sub{red}$.
\end{enumerate}
\end{theorem}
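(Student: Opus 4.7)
The plan is to deduce both parts from a single input: the pro cdh-descent theorem for $K$-theory \cite{Morrow_pro_cdh_descent} applied to the abstract blow-up square involving $\pi:X'\to X$, $E\into X'$, and $Y:=\pi(E)_\sub{red}\into X$. This provides a pro-isomorphism
$$\pi^*:\{K_0(X,rY)\}_r\isoto\{K_0(X',rE)\}_r$$
of pro-abelian groups; the remainder is a formal manipulation in the pro-category.

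For (i), by the definition of a pro-isomorphism, for each sufficiently large $r$ there exist $s=s(r)\ge r$ and a homomorphism $\phi:K_0(X',sE)\to K_0(X,rY)$ which is inverse to $\pi^*$ up to the relevant transitions. After reindexing we set
$$\BS_r\;:=\;\bigl(\,F^dK_0(X',rE)\into K_0(X',rE)\xto{\phi}K_0(X,r'Y)\to K_0(X)\,\bigr),$$
the final arrow being the forgetful map. For a closed point $x'\in X'\setminus E$ with $x:=\pi(x')$, one has $\pi^*[x]=[x']$ tautologically (since $\pi$ is an iso outside $E$), so the inverse relation forces $\phi([x'])=[x]\in K_0(X,r'Y)$, which the forgetful map sends to $[x]\in F^dK_0(X)$ (smooth, since $x\in X\setminus Y\cong X'\setminus E$). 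Thus $\BS_r$ takes values in $F^dK_0(X)$ and sends $[x']\mapsto[\pi(x')]$ on generators; it is unique because $F^dK_0(X',rE)$ is by definition generated by such classes.

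For (ii), the same tracking argument shows that both $\pi^*$ and the pro-inverse $\phi$ preserve $F^d$-subgroups, so the pro-iso restricts to
$$\pi^*:\{F^dK_0(X,rY)\}_r\isoto\{F^dK_0(X',rE)\}_r,$$
and by construction $\BS$ corresponds to the forgetful map $F^dK_0(X,rY)\to F^dK_0(X)$. The Bloch--Srinivas conjecture asserts that the transitions of the right-hand pro-system are iso for $r\gg1$ with stable value $F^dK_0(X)$ via $\BS$---equivalently, that $\BS_r$ is an iso for $r\gg1$. Via the pro-iso this translates to the analogous statement on the left, which, since the forgetful maps factor through successive transitions, amounts to $F^dK_0(X,rY)\to F^dK_0(X)$ being an iso for $r\gg1$.

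The main obstacle will be justifying that the pro-iso supplied by pro cdh-descent restricts to the $F^d$-subgroups and correctly intertwines $\BS$ with the forgetful map; this reduces to the tautology $\pi^*[x]=[\pi^{-1}(x)]$ on closed-point classes, together with the fact that such classes generate $F^d$ on both sides by definition.
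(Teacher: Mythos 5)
Your proposal is correct and follows essentially the same route as the paper: apply pro cdh-descent to the abstract blow-up square, restrict the resulting pro-isomorphism to the $F^d$-subgroups (using that these are generated by cycle classes of closed points of $X\setminus Y=X'\setminus E$, which are matched up by $\pi^*$ and its pro-inverse), define $\BS_r$ via the pro-inverse followed by the forgetful map, and conclude (ii) by the same diagram chase. The only cosmetic difference is that the paper establishes the restricted pro-isomorphism by combining pro-injectivity (restriction of a pro-mono to subobjects) with levelwise surjectivity, whereas you check directly that both $\pi^*$ and its pro-inverse preserve the $F^d$-subgroups; both are valid.
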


Section \ref{subsection_modulus} concerns Chow groups of zero cycles with modulus. If $X$ is a smooth, projective variety over a field $k$ and $D$ is an effective divisor on $X$, then the Chow group with modulus $\CH_0(X;D)$ is defined to be the free abelian group on the closed points of $X\setminus D$, modulo rational equivalence coming from closed curves $C$ which are not contained in $|D|$ and rational functions $f\in k(C)^\times$ which are $\equiv1$ mod $D$. This Chow group is central in M.~Kerz and S.~Saito's \cite{KerzSaito2013} higher dimensional class field theory.

It is natural to formulate an analogue of the Bloch--Srinivas conjecture for the Chow groups with modulus given by successive thickenings of the exceptional fibre of a desingularisation. We will explain this further in Section \ref{subsection_modulus}, where we prove it in the following cases:

\begin{theorem}\label{theorem_intro_KS}
Let $\pi:X'\to X$ be a desingularisation of a $d$-dimensional, quasi-projective, integral variety over an algebraically closed field $k$ which is assumed to have strong resolution of singularities. Let $D$ be an effective Cartier divisor on $X$ covering the exceptional fibre, and assume that $\codim(X,\pi(D))\ge2$.

Then the inverse system \[\CH_0(X';D)\longleftarrow \CH_0(X';2D)\longleftarrow \CH_0(X';3D)\longleftarrow\cdots\] eventually stabilises with stable value equal to $\CH_0(X)$, assuming that either
\begin{enumerate}\itemsep0pt
\item $X$ is projective and $\op{char}k=0$; or
\item $X$ is projective and $d\le\op{char}k\neq0$; or
\item $X$ is affine.
\end{enumerate}
\end{theorem}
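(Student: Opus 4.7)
The plan is to deduce Theorem \ref{theorem_intro_KS} from the $K$-theoretic Bloch--Srinivas results of Theorem \ref{theorem_intro} by identifying the Chow groups with modulus with the corresponding subgroups of relative $K_0$. Three ingredients are needed: a cycle class isomorphism $\CH_0(X'; rD) \isoto F^dK_0(X', rD)$ on the smooth variety, the stabilisation $F^dK_0(X', rD) \isoto F^dK_0(X)$ for $r\gg1$ coming from pro cdh-descent, and a Levine--Weibel-type identification $\CH_0(X) \isoto F^dK_0(X)$ on the singular side.

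First I would construct the cycle class map: a closed point $x\in X'\setminus |D|$ is disjoint from every thickening $rD$, so $[\roi_x]$ defines a class in $F^dK_0(X', rD)$, and a standard check shows that rational equivalence with modulus $rD$ yields trivial $K$-theoretic classes. The key point is that this map is bijective, at least after passing to the inverse system. Surjectivity is automatic, since $F^dK_0(X', rD)$ is generated by closed-point classes by definition. Injectivity is the heart of the matter and requires a moving lemma in the modulus setting: any $K$-theoretic relation among zero cycles supported on $X'\setminus |D|$ must be expressible via curves $C$ not contained in $|D|$ together with rational functions $f\equiv 1 \bmod rD$. For smooth quasi-projective $X'$ over an algebraically closed field this comparison has been developed in the work on Chow groups with modulus (Binda--Krishna, Kerz--Saito, building on the absolute Levine--Weibel formalism), and the codimension-two hypothesis on $\pi(D)$ provides the room needed to move cycles off $|D|$.

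Granting this comparison, the inverse system $\{\CH_0(X'; rD)\}_r$ is identified with $\{F^dK_0(X', rD)\}_r$, and Theorem \ref{theorem_intro} in cases (ii), (iii) and (iv) gives stabilisation at $F^dK_0(X)$ in exactly the three regimes of the present theorem. To upgrade the stable value from $F^dK_0(X)$ to $\CH_0(X)$ one invokes the Levine--Weibel cycle class isomorphism $\CH_0(X) \isoto F^dK_0(X)$ for the singular variety; in each of the three listed cases this is known by work of Srinivas, Levine and Krishna, and indeed it is precisely the hypothesis appearing in part (vii) of Theorem \ref{theorem_intro}.

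The main obstacle is the cycle class isomorphism with modulus. This demands a genuine moving argument tailored to the modulus setting, in which the intervening curves and rational functions responsible for $K$-theoretic trivialisations must be chosen compatibly with the divisor $rD$. The remaining two ingredients are the pro cdh-descent theorem already used for Theorem \ref{theorem_intro} and the known results on zero cycles of singular varieties, so once the modulus comparison is in place the proof is a formal concatenation of the three isomorphisms.
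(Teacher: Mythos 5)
Your overall architecture (identify $\CH_0(X';rD)$ with $F^dK_0(X';rD)$, then apply the $K$-theoretic Bloch--Srinivas stabilisation and the Levine--Weibel isomorphism $\CH_0(X)\isoto F^dK_0(X)$) contains a genuine gap at the step you yourself flag as ``the heart of the matter'': the injectivity of the cycle class map $\CH_0(X';rD)\to F^dK_0(X';rD)$. This is not available as an input. It is essentially Question V of Kerz--Saito, which the paper explicitly treats as open in general; Binda's work constructs the cycle class \emph{homomorphism} but not its injectivity, and in the paper the isomorphism $\CH_0(X';rD)\isoto F^dK_0(X';rD)$ for $r\gg1$ is a \emph{conclusion} of Theorem \ref{theorem_modulus}, not a hypothesis. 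Moreover, the moving argument you invoke cannot work as described: $|D|$ is a divisor on $X'$, hence of codimension one, so there is no room to move zero cycles off it on $X'$; the codimension-$\ge2$ hypothesis concerns $\pi(|D|)$ inside $X$, not $|D|$ inside $X'$.

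The paper's proof avoids this entirely by a sandwich argument. One only needs the cycle class map $\CH_0(X';rD)\to F^dK_0(X';rD)$ to be a well-defined homomorphism (Proposition \ref{proposition_cycle_class_for_modulus}, which requires the pushforward formalism along the normalisation of a curve meeting $|D|$ and the computation of $K_0(C,D)$ for smooth curves in Lemma \ref{lemma_class_group_of_curve} --- a nontrivial ingredient your sketch glosses over). One then precomposes with the canonical surjection $\CH_0(X';|D|)\onto\CH_0(X';rD)$ and identifies $\CH_0(X';|D|)\cong\CH_0(X;|\pi(D)|_\sub{red})\cong\CH_0(X)\cong F^dK_0(X)$ using Remark \ref{remark_Levine_Weibel}(iv), the ordinary moving lemma on $X$ (this is where $\codim(X,\pi(D))\ge2$ is used), and Theorem \ref{theorem_Levine}. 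Since the composite $\CH_0(X';|D|)\to\CH_0(X';rD)\to F^dK_0(X';rD)\xto{\BS_r}F^dK_0(X)$ is then an isomorphism and every intermediate map is surjective, all intermediate maps are isomorphisms; this simultaneously proves the stabilisation and \emph{derives} the modulus cycle class isomorphism you wanted to assume. To repair your proposal you would need to replace the appeal to a modulus moving lemma with this detour through $\CH_0(X';|D|)$.
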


Whenever the assertions of Theorem \ref{theorem_intro_KS} can be proved for a singular, projective variety $X$ over a {\em finite} field (e.g., for surfaces, as we shall see in Remark \ref{remark_surfaces}), it has applications to the class field theory of $X$; in particular, it shows that there is a reciprocity isomorphism of finite groups $\CH_0(X)^0\isoto\pi_1^\sub{ab}(X_\sub{reg})^0$. See Remark \ref{remark_normal_CFT} for further details. 

We prove Theorem \ref{theorem_intro_KS} by reducing it to the analogous assertion in $K$-theory, which is precisely the Bloch--Srinivas conjecture, and then applying Theorem \ref{theorem_intro}. This reduction is through the construction of a new cycle class homomorphism \[\CH_0(X;D)\To F^dK_0(X,D),\] which is valid for any effective Cartier divisor $D$ on a smooth variety $X$. This also allows us to prove the following result, which appears related to a special case of a conjecture of Kerz and Saito \cite[Qu.~V]{KerzSaito2013}:

\begin{theorem}
With notation and assumptions as in Theorem \ref{theorem_intro_KS}, the cycle class homomorphism \[\CH_0(X';rD)\To F^dK_0(X';rD)\] is an isomorphism for $r\gg1$.
\end{theorem}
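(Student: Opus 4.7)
The plan is to fit the cycle class maps into a commutative square with the Bloch--Srinivas stabilisation maps from Theorems \ref{theorem_intro_KS} and \ref{theorem_intro}, thereby reducing the claim to the classical fact that the Levine--Weibel cycle class map $\CH_0(X)\to F^dK_0(X)$ is an isomorphism in the cases under consideration.

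Since $D$ is an effective Cartier divisor, the closed subscheme $rD$ (with ideal sheaf $\mathcal{I}_D^r$) coincides with the $r$-th infinitesimal thickening of its underlying subscheme, so Theorem \ref{theorem_intro} applies with $E:=D$; indeed, cases (i), (ii), (iii) of Theorem \ref{theorem_intro_KS} correspond exactly to (ii), (iii), (iv) of Theorem \ref{theorem_intro}. Hence, for $r\gg 1$, I obtain isomorphisms $\CH_0(X';rD)\isoto \CH_0(X)$ (from Theorem \ref{theorem_intro_KS}) and $F^dK_0(X';rD)\isoto F^dK_0(X)$ (from Theorem \ref{theorem_intro}), and these should assemble into a square
\[
\begin{array}{ccc}
\CH_0(X';rD) & \longrightarrow & F^dK_0(X';rD) \\
\downarrow & & \downarrow \\
\CH_0(X) & \longrightarrow & F^dK_0(X)
\end{array}
\]
whose horizontal arrows are the cycle class maps. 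I would verify commutativity by tracing the class of a closed point $x\in X'\setminus D$ through both compositions: each sends it to the class of its image $\pi(x)\in X$, which is a smooth closed point since $\codim(X,\pi(D))\ge 2$ and $x$ lies away from the exceptional locus.

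It then remains to invoke that the Levine--Weibel cycle class map $\CH_0(X)\to F^dK_0(X)$ is an isomorphism in each of cases (i)--(iii): for $X$ projective over an algebraically closed field this is due to Levine in characteristic zero and, under the characteristic hypothesis $d\le \op{char}k$, to Levine and Pedrini--Weibel, while for $X$ affine over an algebraically closed field it is due to Srinivas. Three sides of the square being isomorphisms for $r\gg 1$ then forces the fourth, namely the top cycle class map, to be an isomorphism as well, which is exactly the claim.

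The main obstacle is the commutativity verification, which rests on the compatibility of the newly constructed cycle class homomorphism $\CH_0(X;D)\to F^dK_0(X,D)$ with both the Bloch--Srinivas map $\BS_r$ of Theorem \ref{theorem_intro}(i) and the Chow pushforward $\CH_0(X';rD)\to \CH_0(X)$. Since, by construction, all these maps send the class of a smooth closed point of $X'\setminus D$ to the class of its image in $X$, commutativity should follow from the definitions, but one must track the stabilisation through the inverse systems with care.
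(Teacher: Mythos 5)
Your proof is correct and follows essentially the same route as the paper: the paper's proof of this statement (Theorem \ref{theorem_modulus}, specialised in Corollary \ref{corollary_modulus}) is exactly the diagram chase you describe, except that it establishes the stabilisation $\CH_0(X';rD)\cong\CH_0(X)$ of Theorem \ref{theorem_intro_KS} and the present claim simultaneously in one commutative diagram (with $\CH_0(X';|D|)$ and $\CH_0(X;\pi(|D|))$ as extra corners, all maps being surjective on classes of closed points of $X'\setminus D$), rather than quoting Theorem \ref{theorem_intro_KS} as a black box for your left vertical arrow. The key inputs are identical: $\BS_r$ is an isomorphism for $r\gg1$ via pro cdh-descent (Corollary \ref{corollary_algebraically_closed}), the cycle class map $\CH_0(X)\to F^dK_0(X)$ is an isomorphism by Theorem \ref{theorem_Levine}, and commutativity is checked on generators exactly as you indicate.
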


\subsection*{Notation, conventions, etc.}
A field $k$ will be called {\em good} if and only if it is infinite, perfect, and has strong resolution of singularities, e.g., $\op{char}k=0$ suffices. A {\em $k$-variety} means simply a finite type $k$-scheme; further assumptions will be specified when required, and the reference to $k$ with occasionally be omitted. Our conventions about ``desingularisations'' can be found at the start of Section \ref{subsection_BS}.

A {\em curve} over $k$ is a one-dimensional, integral $k$-variety. Given a closed point $x\in C_0$, there is an associated order function $\op{ord}_x:k(X)^\times\to\bb Z$ characterised by the property that $\op{ord}_x(t)=\op{length}_{\roi_{C,x}}(\roi_{C,x}/t\roi_{C,x})$ for any non-zero $t\in\roi_{C,x}$; when $C$ is smooth $\op{ord}_x$ is the usual valuation associated to $x$.

An {\em effective divisor} $D$ on $X$ is by definition a closed subscheme whose defining sheaf of ideals $\roi_X(-D)$ is an invertible $\roi_X$-module, or, equivalently, is locally defined by a single non-zero-divisor; its associated support is denoted by $|D|$, but we write $X\setminus D$ in place of $X\setminus |D|$ for simplicity.

Given a closed embedding $Y=\Spec\roi_X/\cal I\into X$, its $r^\sub{th}$ infinitesimal thickening is denoted by $rY=\Spec\roi_X/\cal I^r$.

A {\em pro abelian group} $\{A_r\}_r$ is an inverse system of abelian groups, with morphisms given by the rule \[\Hom_{\op{Pro}Ab}(\{A_r\}_r,\{B_s\}_s):=\projlim_s\indlim_r\Hom_{Ab}(A_r,B_s).\] The category of pro abelian groups is abelian; we refer to \cite[App.]{ArtinMazur1969} for more details.

\subsection*{Acknowledgments}
Section \ref{section_desings} would not have been possible without discussions with V.~Srinivas and M.~Levine about zero cycles. Section \ref{subsection_modulus} was inspired by conversations with F.~Binda and S.~Saito at the \'Etale and motivic homotopy theory workshop in Heidelberg, 24--28 March 2014, and I thank A.~Schmidt and J.~Stix for organising such a pleasant event.

\section{Zero cycles of desingularisations}\label{section_desings}
In this section we prove cases of the Bloch--Srinivas conjecture relating zero cycles on a singular variety to those on its desingularisation.

There will be an important distinction between closed subsets $S\subseteq X$ and closed subschemes $Y\into X$; in an attempt to keep this clear we will use the differentiating notation $\subseteq$ and $\into$ just indicated. Any closed subscheme $Y\into X$ has an associated support $|Y|\subseteq X$, though we will continue to write $X\setminus Y$ rather than $X\setminus |Y|$ for the associated open complement, and any closed subset $S\subseteq X$ has an associated reduced closed subscheme $S_\sub{red}\into X$. The singular locus of $X$ is denoted by $X_\sub{sing}\subseteq X$.

\subsection{Review of the Levine--Weibel Chow group}\label{subsection_LW}
We begin by reviewing the Levine--Weibel Chow group of zero cycles \cite{Levine1985b, Levine1985a}, restricting to the situation that the singularities of $X$ are in codimension $\ge2$, since this is sufficient for our applications. Unless specified otherwise, $k$ is an arbitrary field.

\begin{definition}\label{definition_LW}
Let $X$ be an integral $k$-variety which is regular in codimension one, and $S\subseteq X$ any closed subset containing $X_\sub{sing}$. Then the associated {\em Levine--Weibel Chow group of zero cycles} is
\[\CH_0(X;S):=\frac{\mbox{free abelian group on closed points of $X\setminus S$}}{\pid{(f)_C\,:\,\mbox{$C\into X$ a curve not meeting $S$, and $f\in k(C)^\times$}}}\] where $(f)_C:=\sum_{x\in C_0}\op{ord}_x(f)\,x$ as usual. In particular, $\CH_0(X):=\CH_0(X;X_\sub{sing})$.
\end{definition}

\begin{remark}\label{remark_Levine_Weibel}
Several remarks should be made:
\begin{enumerate}
\item The group $\CH_0(X;S)$ we have just defined can actually only reasonably be called the Levine--Weibel Chow group of zero cycles if we assume that $\op{codim}(X,S)\ge2$. But it is convenient to introduce the notation in slightly greater generality since it will be useful in Section \ref{subsection_modulus}.
\item An inclusion of closed subsets $S\subseteq S'$ of $X$, both containing $X_\sub{sing}$, induces a canonical surjection $\CH_0(X;S')\onto\CH(X;S)$. This surjection is an isomorphism if $X$ is quasi-projective and $S$, $S'$ have codimension $\ge 2$, by a moving lemma \cite[pg.~113]{Levine1985a}.
\item Suppose that $X$ is a smooth $k$-variety and that $S\subseteq X$ is a closed subset. Then there is a canonical surjection $\CH_0(X;S)\onto\CH_0(X;\emptyset)=\CH_0(X)$, which will be an isomorphism if $S$ has codimension $\ge2$ and $X$ is quasi-projective, by the aforementioned moving lemma.
\item Suppose that $X'\to X$ is a proper morphism which restricts to an isomorphism $X'\setminus S'\isoto X\setminus S$ for some closed subsets $S\subseteq X$, $S'\subseteq X'$ containing the singular loci. Then the induced map $\CH_0(X;S)\to\CH_0(X';S')$ is an isomorphism. Indeed, both sides are generated by the closed points of $X'\setminus S'= X\setminus S$, and closed curves on $X$ not meeting $S$ correspond to closed curves on $X'$ not meeting $S'$.
\end{enumerate}
\end{remark}

To review the relationship between $\CH_0(X)$ and $K$-theory, we must first explain the cycle class map. Let $X$ be a $k$-variety, and $i:Y\into X$ a fixed closed subscheme. If $j:C\into X$ is a closed subscheme with image disjoint from both $|Y|$ and $X_\sub{sing}$, then $j$ is of finite Tor dimension since it factors as $C\into X_\sub{reg}\to X$, and it is moreover proper; thus the pushforward map $j_*:K(C)\to K(X)$ on the $K$-theory spectra is well-defined. Moreover, the projection formula \cite[Prop.~3.18]{Thomason1990} associated to the pullback diagram
\[\xymatrix{
\emptyset \ar[r]\ar[d] & C\ar[d]^j\\
Y\ar[r]_i & X
}\]
shows that the composition $K(C)\xto{j_*}K(X)\xto{i^*}K(Y)$ is null-homotopic, and thus there is an induced pushforward $j_*:K(C)\to K(X,Y)$. The {\em cycle class} of $C$ in $K_0(X,Y)$ is defined to be \[[C]:=j_*([\roi_C])\in K_0(X,Y).\] Although this appears to depend a priori on a chosen null-homotopy, it was shown by K.~Coombes \cite{Coombes1982} that the ``obvious choices of homotopies'' yield a class which is functorial with respect to both $X$ and $Y$, and so we will follow Coombes' choices. A codimension filtration on $K_0(X,Y)$ is now defined by
\begin{align*}
F^pK_0(X,Y):=\pid{[C]:C\into X&\mbox{ an integral closed subscheme of $X$ of codim}\ge p\\
&\mbox{ disjoint from }|Y|\mbox{ and }X_\sub{sing}}
\end{align*}
In particular, $F^dK_0(X,Y)$ is the subgroup of $K_0(X,Y)$ generated by the cycle classes of smooth, closed points of $X\setminus Y$. The following is standard:

\begin{lemma}\label{lemma_easy_Chern_class}
Let notation be as immediately above. If $j:C\into X$ is a closed embedding of a curve into $X$ not meeting $|Y|$ or $X_\sub{sing}$, and $f\in k(C)^\times$, then $\sum_{x\in C_0}\op{ord}_x(f)[x]=0$ in $K_0(X,Y)$.
\end{lemma}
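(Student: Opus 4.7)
The plan is to reduce the claim, via the normalisation of $C$, to the classical vanishing of a principal divisor in the $K_0$ of a smooth curve.

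Let $\pi:\tilde C\to C$ denote the normalisation, a finite birational morphism from a smooth integral curve with $k(\tilde C)=k(C)$, and set $\tilde j:=j\circ\pi:\tilde C\to X$. Since $\tilde j$ factors through $X_\sub{reg}\setminus|Y|$, it is proper and of finite Tor dimension, so the same projection formula argument appearing just before the lemma (applied to the pullback square obtained by replacing $j$ by $\tilde j$, which still has $\emptyset$ in the upper-left corner) produces a well-defined pushforward $\tilde j_*:K_0(\tilde C)\to K_0(X,Y)$. By Coombes' functoriality applied to the factorisation $\{\tilde x\}\into\tilde C\xto{\tilde j}X$ of the inclusion of a closed point, combined with a filtration of the $\roi_{X,x}$-module $k(\tilde x)$ of length $[k(\tilde x):k(x)]$ by subquotients isomorphic to $k(x)$, one obtains the pushforward formula
\[
\tilde j_*[\roi_{\tilde x}]\;=\;[k(\tilde x):k(x)]\cdot[x]\quad\text{in }K_0(X,Y),
\]
where $x:=\pi(\tilde x)\in C_0$.

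On the smooth curve $\tilde C$, the classical identification $K_0(\tilde C)\cong\bb Z\oplus\Pic(\tilde C)$, under which $[\roi_{\tilde x}]$ corresponds to $(0,[\tilde x])$, shows that the cycle $\sum_{\tilde x}\op{ord}_{\tilde x}(f)\,[\roi_{\tilde x}]$ vanishes in $K_0(\tilde C)$ since $(f)$ is a principal divisor. Applying $\tilde j_*$ and regrouping the summation by fibres of $\pi$ yields
\[
0\;=\;\sum_{x\in C_0}\Bigl(\sum_{\tilde x\mapsto x}[k(\tilde x):k(x)]\,\op{ord}_{\tilde x}(f)\Bigr)[x]\quad\text{in }K_0(X,Y),
\]
and the standard ramification formula, which expresses the length order function $\op{ord}_x$ on the one-dimensional local ring $\roi_{C,x}$ in terms of the discrete valuations at the points of its normalisation, identifies the inner parenthesis with $\op{ord}_x(f)$, as desired.

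The main technical point to verify is the pushforward formula $\tilde j_*[\roi_{\tilde x}]=[k(\tilde x):k(x)]\cdot[x]$ in the relative group $K_0(X,Y)$ rather than merely in $K_0(X)$; this amounts to tracking Coombes' functorial choice of null-homotopy through the composition of finite-Tor-dimension, proper morphisms $\{\tilde x\}\into\tilde C\xto{\tilde j}X$, all of whose images avoid $|Y|$ and $X_\sub{sing}$.
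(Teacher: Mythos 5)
Your argument is correct, but it takes a genuinely different route from the paper's. The paper's proof is a one-liner carried out directly on $C$: it observes that $\sum_{x\in C_0}\op{ord}_x(f)[x]=j_*([\roi_C]-[f\roi_C])$, and that this vanishes because multiplication by $f$ identifies $\roi_C$ with the invertible fractional ideal $f\roi_C$, so the class being pushed forward is already zero in $K_0(C)$; the definition of $\op{ord}_x$ via lengths (as fixed in the paper's conventions) is exactly what makes $[\roi_C]-[f\roi_C]$ compute to the divisor of $f$ with no reference to the normalisation. You instead pass to $\tilde C$, use $K_0(\tilde C)\cong\bb Z\oplus\Pic(\tilde C)$ and the vanishing of principal divisors there, and then descend via the pushforward formula $\tilde j_*[\roi_{\tilde x}]=[k(\tilde x):k(x)]\cdot[x]$ together with the length--valuation ramification formula for the order function on the one-dimensional local rings $\roi_{C,x}$. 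Both of your auxiliary ingredients are standard and correctly invoked (the Tor-independence/projection-formula setup for $\tilde j_*$ into the relative group is literally the one the paper uses later in the proof of Proposition \ref{proposition_cycle_class_for_modulus}, and the ramification formula is the usual statement for finite normalisation of a one-dimensional local domain). What your route buys is that all the $K$-theoretic input happens on a smooth curve, and it is the pattern of argument one is forced into for the modulus refinement; what it costs is the two extra lemmas (the multiplicity-weighted pushforward of skyscrapers in the relative group, and the regrouping identity for $\op{ord}_x$), neither of which the paper's direct computation on the possibly singular curve $C$ requires.
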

\begin{proof}
One has $\sum_{x\in C_0}\op{ord}_x(f)[x]=j_*([\roi_C]-[f\roi_C])=j_*(0)=0$ .
\end{proof}

Now suppose that $X$ is a $d$-dimensional, integral $k$-variety which is regular in codimension one, let $Y\into X$ be a closed subscheme, and let $S\subseteq X$ be a closed subset containing both $|Y|$ and $X_\sub{sing}$. It follows from Lemma \ref{lemma_easy_Chern_class} that the cycle class homomorphism \[\CH_0(X;S)\To F^dK_0(X,Y),\quad x\Mapsto[x]\] is well-defined. In particular, taking $S=X_\sub{sing}$ and $Y=\emptyset$ yields the cycle class homomorphism \[[\;\;]:\CH_0(X)\To F^dK_0(X),\] which is evidently surjective. Moreover, as part of a general Riemann--Roch theory, M.~Levine \cite{Levine1983, Levine1985b} constructed a Chern class $ch_0:F^dK_0(X)\to \CH_0(X)$ such that the compositions $[\;\;]\circ ch_0$ and $ch_0\circ[\;\;]$ are both multiplcation by $(-1)^{d-1}(d-1)!$. In particular, $[\;\;]:\CH_0(X)\to F^dK_0(X)$ is an isomorphism if $d=2$.

We complete our review of the Levine--Weibel Chow group of zero cycles by presenting the higher dimensional cases in which the cycle class homomorphism can be shown to be an isomorphism:

\begin{theorem}[Barbieri Viale, Levine, Srinivas]\label{theorem_Levine}
Let $X$ be a $d$-dimensional, integral, quasi-projective variety over an algebraically closed field which is regular in codimension one. Then the cycle class homomorphism $\CH_0(X)\to F^dK_0(X)$ is
\begin{enumerate}\itemsep0pt
\item an isomorphism if $X$ is projective and $\op{char}k=0$;
\item an isomorphism if $X$ is projective and $d\le\op{char}k\neq0$;
\item an isomorphism if $X$ is affine and $\op{char}k$ is arbitrary;
\item a surjection with finite kernel if $X_\sub{sing}$ is contained in an affine open subscheme of $X$ and $\op{char}k=0$;
\item a surjection with finite kernel if $X_\sub{sing}$ is contained in an affine open subscheme of $X$ and $d\le \op{char}k\neq0$;
\end{enumerate}
\end{theorem}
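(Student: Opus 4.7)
Since this theorem is explicitly attributed to Barbieri Viale, Levine, and Srinivas, the proof is essentially a compilation of results from their papers, and my plan is to organise the compilation around Levine's Riemann--Roch framework already recalled in the text. The starting observation is that $ch_0\circ[\;\;]$ and $[\;\;]\circ ch_0$ are both multiplication by $(-1)^{d-1}(d-1)!$, so the kernel and cokernel of the cycle class map are annihilated by $(d-1)!$ and the map is automatically an isomorphism after inverting $(d-1)!$. Each of parts (i)--(v) therefore reduces to a statement about $(d-1)!$-torsion, either in $\CH_0(X)$ or in $F^dK_0(X)$.

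For the projective cases (i) and (ii) the required torsion control comes from a Roitman-type theorem for the Levine--Weibel Chow group of singular projective varieties, established by Srinivas and extended by Barbieri Viale and Levine. The key geometric input is a ``singular Albanese'' variety $\mathrm{Alb}(X)$ together with a surjective cycle class map from the degree-zero part of $\CH_0(X)$ onto $\mathrm{Alb}(X)(k)$ whose kernel is shown to be torsion-free; a parallel analysis on the $K$-theoretic side yields the same torsion-free kernel, so the $(d-1)!$-torsion in the kernel of $[\;\;]$ must vanish. The numerical hypothesis $d\le\op{char}k$ in part (ii) is precisely the bound under which the $p$-primary part of Roitman's torsion theorem survives in positive characteristic, via a Bloch--Srinivas style decomposition of the diagonal on a desingularisation.

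For the affine case (iii) one cites Murthy's theorem and its extensions by Srinivas and Krishna--Srinivas: for a $d$-dimensional affine variety over an algebraically closed field, $F^dK_0(X)$ has no $(d-1)!$-torsion, so Levine's $ch_0$ provides an honest two-sided inverse of the cycle class map. Parts (iv) and (v) then follow by localisation: if $X_\sub{sing}$ is contained in an affine open $U\sseq X$, one applies case (iii) to $U$ and compares with the smooth complement $X_\sub{reg}$ (on which $[\;\;]$ is already an isomorphism by the codimension-$\geq 2$ moving lemma of Remark~\ref{remark_Levine_Weibel}(iii)) via a Mayer--Vietoris/excision argument along $U\cap X_\sub{reg}$; the finiteness of the kernel in (iv) and (v) reflects the finite-type nature of the gluing obstruction. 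The main obstacle is not a single technical step but bibliographic bookkeeping: each input lives in a distinct paper, and one must verify that the hypotheses ``regular in codimension one'', quasi-projectivity, and (in finite characteristic) $d\le\op{char}k$ propagate compatibly through the normalisation and localisation steps. This is most delicate for (v) in positive characteristic, where the projective input of (ii) has to be combined with the affine input of (iii) in a way that respects the $p$-primary form of Roitman's theorem.
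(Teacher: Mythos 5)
Your opening reduction is the same as the paper's: Levine's Chern class $ch_0$ makes the (automatically surjective) cycle class map an isomorphism up to $(d-1)!$-torsion, so each part reduces to a torsion statement. But be careful about which side the torsion must be controlled on: since $ch_0\circ[\;\;]=\pm(d-1)!$, the kernel of $[\;\;]$ lies in the $(d-1)!$-torsion of $\CH_0(X)$, so what is needed is vanishing (or finiteness) of torsion in $\CH_0(X)$, not in $F^dK_0(X)$. Your case (iii) is aimed at the wrong group: knowing that $F^dK_0(X)$ has no $(d-1)!$-torsion only yields injectivity of $ch_0$ and does not kill $\ker[\;\;]$. The paper instead quotes Levine for torsion-freeness of $\CH_0(X)$ of affine varieties in characteristic $0$ (and when $d\le\op{char}k$), and in arbitrary positive characteristic it must do genuine extra work, because Srinivas's torsion-freeness theorem applies only to \emph{normal} affine varieties: one passes to the normalisation $\tilde X$, uses that $X$ is regular in codimension one to produce conductor-type closed subsets of codimension $\ge2$ off which $\tilde X\to X$ is an isomorphism, and then Remark~\ref{remark_Levine_Weibel}(ii) and (iv) give $\CH_0(X)\isoto\CH_0(\tilde X)$. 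Your proposal omits this normalisation step entirely, and without it case (iii) in positive characteristic is not covered by the results you cite.

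The more serious gap is in (iv) and (v). The paper does not derive these from (iii) by localisation; it cites Barbieri Viale's Theorem~A, which shows directly that $\CH_0(X)$ has only finitely much $(d-1)!$-torsion when $X_\sub{sing}$ lies in an affine open. There is no excision or Mayer--Vietoris sequence for the Levine--Weibel Chow group along $U\cap X_\sub{reg}$ that would let you glue the affine case to the smooth locus, and ``the finite-type nature of the gluing obstruction'' is not an argument: these Chow groups are typically far from finitely generated, so finiteness of a kernel cannot be extracted from such a comparison. As written, (iv) and (v) are unproved in your proposal. (Note also that Remark~\ref{remark_Levine_Weibel}(iii) concerns the surjection $\CH_0(X;S)\onto\CH_0(X)$ for smooth $X$, not the cycle class map to $K$-theory, so it cannot be invoked to say that $[\;\;]$ is an isomorphism on $X_\sub{reg}$.)
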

\begin{proof}
Thanks to the existence of Levine's Chern class $ch_0$, it is enough to check that $\CH_0(X)$ has no $(d-1)!$-torsion in cases (i)--(ii), that it has only a finite amount of $(d-1)!$-torsion in cases (iv)--(v), and that it has no torsion in case (iii).

Then (i) and (ii) are \cite[Thm.~3.2]{Levine1985b}, while (iv) and (v) are \cite[Thm.~A]{BarbieriViale1992}. Finally, (iii) in characteristic zero (and when $d\le\op{char}k\neq0$) is \cite[Corol.~2.7]{Levine1985b}, and so it remains only to deal with the following case: assuming that $X$ is an integral, affine variety which is regular in codimension one, over an algebraically closed field of finite characteristic, we must show that $\CH_0(X)$ is torsion-free. This is true for the normalisation $\tilde X$ by \cite{Srinivas1989}, and so it remains only to check that $\CH_0(X)\isoto\CH_0(\tilde X)$. But since $X$ is assumed to be regular in codimension one, there are closed subsets $S\subseteq X$, $S'\subseteq \tilde X$ (given by the conductor ideal, for example) of codimension $\ge2$, containing the singular loci, and such that the morphism $\tilde X\to X$ restricts to an isomorphism $\tilde X\setminus S'\isoto X\setminus S$. Then, in the commutative diagram 
\[\xymatrix{
\CH_0(\tilde X;S') \ar[r] & \CH_0(\tilde X)\\
\CH_0(X;S) \ar[r]\ar[u] & \CH_0(X)\ar[u]
}\]
the horizontal arrows are isomorphisms by Remark \ref{remark_Levine_Weibel}(ii), while the left vertical arrow is an isomorphism by Remark \ref{remark_Levine_Weibel}(iv). Hence the right vertical arrow is an isomorphism, as required.
\end{proof}

\subsection{The Bloch--Srinivas conjecture}\label{subsection_BS}
Before we can carefully state the Bloch--Srinivas conjecture we must first fix some terminology concerning desingularisations. Given an integral variety $X$, a {\em desingularisation} is any proper, birational morphism $\pi:X'\to X$ where $X'$ is smooth; in particular, we allow the desingularisation to change the smooth locus of $X$, though it is not clear if this is ever important in practice. There exists a smallest closed subset $S\subseteq X$ with the property that $X'\setminus \pi^{-1}(S)\isoto X\setminus S$, and $\pi^{-1}(S)$ is known as the {\em exceptional set} of the resolution; setting $E:=\pi^{-1}(S)_\sub{red}$ yields the {\em exceptional fibre} $E\into X'$. Corollaries \ref{corollary_previous_results}--\ref{corollary_up_to_finite} will require that $\pi(|E|)$ has codimension $\ge2$ in $X$, which in particular implies that $X$ is regular in codimension one.

If $X'\to X$ is a desingularisation of an integral variety $X$, with exceptional fibre $E\into X'$, then Bloch and Srinivas \cite[pg.~6]{Srinivas1985a} made the following conjecture in 1985:

\begin{conjecture}[Bloch--Srinivas]\label{conjecture_BS}
The inverse system \[F^dK_0(X',E)\longleftarrow F^dK_0(X',2E)\longleftarrow F^dK_0(X',3E)\longleftarrow\cdots\] stabilises, with stable value $F^dK_0(X)$.
\end{conjecture}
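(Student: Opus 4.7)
The strategy is dictated by the framework set up in the introduction: I would deduce the conjecture from pro cdh-descent for $K$-theory, as foreshadowed by Theorem~0.2. Set $Y:=\pi(E)_\sub{red}$ and consider the abstract blow-up square
\[\xymatrix{
rE \ar[r]\ar[d] & X' \ar[d]^{\pi}\\
rY \ar[r] & X.
}\]
Pro cdh-descent applied to this square yields a pro-equivalence of relative $K$-theory spectra $\{K(X',rE)\}_r\simeq\{K(X,rY)\}_r$. Taking $\pi_0$ and restricting to the subgroups generated by cycle classes of closed points (which make sense on both sides since $\pi$ identifies $X'\setminus E$ with $X\setminus Y$, and these points lie in the smooth locus by the codimension hypothesis), one obtains a pro-isomorphism $\{F^dK_0(X',rE)\}_r\cong\{F^dK_0(X,rY)\}_r$. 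The Bloch--Srinivas conjecture is then equivalent to the assertion that the natural surjection $F^dK_0(X,rY)\to F^dK_0(X)$ (well-defined because both sides are generated by cycle classes of closed points in $X\setminus Y$, using $\codim(X,Y)\ge2$ together with the moving lemma of Remark~0.6(ii)) is an isomorphism for $r\gg1$; this is part~(ii) of Theorem~0.2, which I would take as the target intermediate statement.

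To prove the cases in Theorem~0.1, I would run this reduction through Levine's Riemann--Roch machinery. The Chern class $ch_0:F^dK_0(X)\to\CH_0(X)$ and the cycle class $[\;]:\CH_0(X)\to F^dK_0(X)$ compose to $\pm(d-1)!$ on either side, so the kernel and cokernel of $F^dK_0(X,rY)\to F^dK_0(X)$ are $(d-1)!$-torsion up to groups controlled by the analogous Levine--Weibel statement. This immediately yields case~(i). For cases (ii)--(v), Theorem~0.4 identifies $\CH_0(X)$ with $F^dK_0(X)$ exactly (or up to a finite group in case~(v)) under the respective hypotheses on $X$, reducing the problem to a stabilization statement inside $\CH_0(X;rY)$-type groups, which I would handle by the Levine--Weibel moving lemma: any curve on $X$ meeting $Y$ can, by the codimension-$\ge 2$ assumption, be moved off $Y$, showing that the natural map $\CH_0(X;rY)\to\CH_0(X)$ is an isomorphism. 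Case~(vi) is the degenerate case where $Y$ is $0$-dimensional, so that the infinitesimal thickenings $rY$ are just Artinian and the problem becomes elementary. Case~(vii) is tautological once one has the pro-isomorphism with $\CH_0(X)$ in hand.

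\textbf{Main obstacle.} The genuinely hard input is of course pro cdh-descent itself, but taking that as a black box, the main subtlety is the compatibility of the pro-equivalence $\{K(X',rE)\}\simeq\{K(X,rY)\}$ with the codimension filtration $F^d$ generated by cycle classes of smooth closed points. One must check that under the equivalence, Coombes's cycle class $[x]\in K_0(X',rE)$ of a closed point $x\in X'\setminus E$ is identified with its cycle class in $K_0(X,rY)$ viewed as a point of $X\setminus Y$. This is a functoriality statement for the pushforward along a closed embedding into a pair, and it requires the codimension-$\ge 2$ hypothesis to ensure that the relevant pushforwards have finite Tor dimension in both directions. The second nontrivial point is the moving lemma step used to identify $\CH_0(X;rY)$ with $\CH_0(X)$: closed curves on $X$ must be deformed away from the nilpotent thickening $rY$ while respecting the relations, and this is where quasi-projectivity and the assumption that $k$ is infinite play their role.
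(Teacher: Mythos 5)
Your plan follows the paper's own route essentially verbatim: pro cdh-descent applied to the abstract blow-up square gives the pro-isomorphism $\{F^dK_0(X,rY)\}_r\cong\{F^dK_0(X',rE)\}_r$ (Theorem \ref{theorem_BS}), reducing the conjecture to whether $F^dK_0(X,rY)\to F^dK_0(X)$ is an isomorphism, which is then settled case by case via Levine's Chern class $ch_0$, the moving lemma identifying $\CH_0(X;|Y|)\isoto\CH_0(X)$, and the Levine--Weibel/Barbieri Viale/Srinivas results of Theorem \ref{theorem_Levine}. The only cosmetic discrepancy is that your square with $rE$ over $rY$ is not literally a pullback (the paper uses $Y'=X'\times_XY$ and notes its thickenings are pro-cofinal with the $rE$), but this does not change the argument.
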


\begin{remark}
To be precise, Bloch and Srinivas stated their conjecture in the case of a normal surface $X$ over an algebraically closed field, assuming that the desingularisation did not alter the smooth locus of $X$. If Conjecture \ref{conjecture_BS} is false because it has been formulated in excessive generality, it is the author's fault. In fact, we will consider Conjecture \ref{conjecture_BS} in greater generality still, by replacing the exceptional fibre $E$ by any reduced closed subscheme $E\into X'$ whose support contains the exceptional set (henceforth ``covers the exceptional set'').
\end{remark}

We interpret part of the Bloch--Srinivas conjecture as an implicit statement that there exists a cycle class homomorphism \[\BS_r:F^dK_0(X',rE)\To F^dK_0(X)\] for $r\gg1$ which is compatible with cycle classes of closed points $x\in X'\setminus E$, i.e., $\BS_r([x])=[x]$. Such a map $\BS_r$ is unique if it exists.

Our main technical theorem, which is an immediate consequence of the author's pro cdh-descent theorem for $K$-theory \cite{Morrow_pro_cdh_descent}, proves the existence of the maps $\BS_r$ in full generality, and reduces the Bloch--Srinivas conjecture to the study of the $K$-theory of $X$:

\begin{theorem}\label{theorem_BS}
Let $X$ be a $d$-dimensional, integral variety over a good field $k$; let $\pi:X'\to X$ be a desingularisation, $E\into X'$ any reduced closed subscheme covering the exceptional set, and set $Y:=\pi(|E|)_\sub{red}$. Then:
\begin{enumerate}
\item For $r\gg1$, the canonical map $F^dK_0(X,rY)\to F^dK_0(X)$ factors through the surjection $F^dK_0(X,rY)\to F^dK_0(X',rE)$, i.e., there exists a commutative diagram
\[\xymatrix@=1.5cm{
F^dK_0(X',rE) \ar@{-->}[dr]^{\exists\, \BS_r}\ar[r] & F^dK_0(X')\\
F^dK_0(X,rY)\ar@{->>}[u]\ar[r]& F^dK_0(X)\ar[u]
}\]
\item The following are equivalent:
\begin{enumerate}
\item The associated Bloch--Srinivas conjecture is true, i.e., $\BS_r$ is an isomorphism for $r\gg1$.
\item The canonical map $F^dK_0(X,rY)\to F^dK_0(X)$ is an isomorphism for $r\gg1$.
\item The canonical map $F^dK_0(X,rY)\to F^dK_0(X)$ is an isomorphism for all~${r\ge1}$.
\end{enumerate}
\end{enumerate}
\end{theorem}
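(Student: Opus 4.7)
The plan is to invoke the author's pro cdh-descent theorem for $K$-theory applied to the abstract blow-up square formed by $E\into X'$, $Y\into X$, and $\pi:X'\to X$. This yields a pro-isomorphism $\{K_n(X,rY)\}_r \isoto \{K_n(X',rE)\}_r$ of pro-abelian groups for every integer $n$. For $n=0$, the map $s_r:K_0(X,rY)\to K_0(X',rE)$ induced by $\pi^*$ is compatible with cycle classes, $s_r([x])=[x]$ for every closed point $x\in X\setminus Y=X'\setminus E$, so it restricts to a pointwise-surjective map $F^dK_0(X,rY)\onto F^dK_0(X',rE)$. Its kernel is a pro-subobject of the pro-zero kernel of $\{s_r\}$ on full $K_0$, hence itself pro-zero, and so the restricted pro-map between the $F^d$-subgroups is also a pro-isomorphism.

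For part (i), the canonical pro-map $\{F^dK_0(X,rY)\}\to F^dK_0(X)$ composed with the inverse of this pro-isomorphism produces a pro-map $\{F^dK_0(X',rE)\}\to F^dK_0(X)$. Since the target is constant, this pro-map is represented by actual homomorphisms $\BS_r$ for $r\gg1$, compatible with transitions. The identity $\BS_r([x])=[x]$ on cycle classes holds because the pro-isomorphism preserves cycle classes in both directions, and commutativity of the stated diagram is automatic from the construction.

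For part (ii), write $A_r:=F^dK_0(X,rY)$, $B_r:=F^dK_0(X',rE)$, $C:=F^dK_0(X)$, and $c_r:=\BS_r\circ s_r$. The implication (c)$\Rightarrow$(b) is trivial, and (b)$\Rightarrow$(a) follows because $c_r$ being an isomorphism forces $\ker s_r\sseq\ker c_r=0$, making $s_r$ and hence $\BS_r=c_r\circ s_r^{-1}$ isomorphisms. The essential direction is (a)$\Rightarrow$(c): under (a), $\{c_r\}=\{\BS_r\}\circ\{s_r\}$ is a composition of two pro-isomorphisms, hence itself a pro-isomorphism. A choice of pro-inverse supplies a compatible family of sections $g_r:C\to A_r$ with $c_r\circ g_r=1_C$ for every $r$, and with the property that for each $s$ some $t\ge s$ satisfies $g_s\circ c_t=(A_t\to A_s)$. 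Combined with the elementary surjectivity of the transition maps $A_r\onto A_s$ for $r\ge s$ (both groups are generated by cycle classes of closed points of $X\setminus Y$, and these generators are identified by the transition), a brief diagram chase yields $\ker c_s=0$ for every $s\ge1$, so each $c_s$ is both split surjective (from $g_s$) and injective. The main obstacle, beyond routine bookkeeping in the pro-category, is the verification that the pro-isomorphism supplied by pro cdh-descent restricts cleanly to the $F^d$-subgroups and preserves cycle classes; once this is in hand, both parts of the theorem are formal consequences.
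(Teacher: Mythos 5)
Your proposal is correct and follows essentially the same route as the paper: pro cdh-descent applied to the abstract blow-up square yields a pro-isomorphism on $K_0$, which restricts to a pro-isomorphism on the $F^d$-subgroups via injectivity plus levelwise surjectivity (both sides being generated by cycle classes of closed points of $X\setminus Y=X'\setminus E$), and parts (i) and (ii) then follow by the same diagram chases. The only detail you gloss over is that the cartesian square has $X'\times_X Y$ --- a nilpotent thickening of $E$, not $E$ itself --- in its top-left corner, but this does not change the pro-system $\{K_0(X',rE)\}_r$ and so does not affect the argument.
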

\begin{proof}
There is an abstract blow-up square \[\xymatrix{
Y'\ar[r]\ar[d] & X' \ar[d]^\pi \\
Y\ar[r] & X
}\]
where $Y':=X'\times_XY$; note that $Y'$ is a nilpotent thickening of $E$. By pro cdh-descent for $K$-theory \cite[Thm.~0.1]{Morrow_pro_cdh_descent} (it is here that the field $k$ is required to be good), the canonical homomorphism of pro abelian groups \[\{K_0(X,rY)\}_r\To\{K_0(X',rY')\}_r\cong\{K_0(X',rE)\}_r\] is an isomorphism. Restricting to the codimension filtration we deduce that the homomorphism \[\{F^dK_0(X,rY)\}_r\To\{F^dK_0(X',rE)\}_r\tag{\dag}\] is injective; but each map $F^dK_0(X,rY)\to F^dK_0(X',rE)$ is evidently surjective, since both sides are generated by the closed points of $X\setminus Y=X'\setminus E$. Thus (\dag) is an isomorphism.

By definition of an isomorphism of pro abelian groups, this implies that for any $s\ge1$ there exists $r\ge s$ and a homomorphism $F^dK_0(X',rE')\to F^dK_0(X,sY)$ making the diagram commute:
\[\xymatrix@=1cm{
F^dK_0(X',rE) \ar@{-->}[dr]^{\exists}&\\
F^dK_0(X,rY)\ar@{->>}[u]\ar@{->>}[r]& F^dK_0(X,sY)
}\]
Note that the vertical and horizontal arrows are surjective, since the groups are generated by the closed points of $X\setminus Y=X'\setminus E$. This diagram shows that the canonical map $F^dK_0(X,rY)\to F^dK_0(X)$ factors through the surjection $F^dK_0(X,rY)\to F^dK_0(X',rE)$, proving (i).

This gives a commutative diagram
\[\xymatrix@=1cm{
F^dK_0(X',rE) \ar[dr]\ar@/^10pt/[drr]^{\BS_r}&\\
F^dK_0(X,rY)\ar@{->>}[u]\ar@{->>}[r]& F^dK_0(X,sY)\ar[r] & F^dK_0(X)
}\]
from which a simple diagram chase yields the following implications (valid for any $s\ge1$ and $r\gg s$):
\begin{quote}
$F^dK_0(X,rY)\to F^dK_0(X)$ is an isomorphism $\implies$ $\BS_r$ is an isomorphism.\\
$\BS_r$ is an isomorphism $\implies$ $F^dK_0(X,sY)\to F^dK_0(X)$ is an isomorphism.
\end{quote}
The equivalence of (a)--(c) follow, completing the proof.
\end{proof}

\begin{remark}
Suppose that the desingularisation $X'\to X$ does not change the smooth locus of $X$ and that $E$ is equal to the exceptional fibre (this is probably the most important case of the conjecture). Then Theorem \ref{theorem_BS} states that the associated Bloch--Srinivas conjecture is true if and only if $F^dK_0(X,rY)\isoto F^dK_0(X)$ for $r\gg1$, where $Y=(X_\sub{sing})_\sub{red}$.

In particular, under these additional hypotheses on $X'$ and $E$ we see that the Bloch--Srinivas conjecture depends only on $X$, and not on the chosen desingularisation. Even in the case of arbitrary desingularisations and general $E$ covering the exceptional set, Theorem \ref{theorem_BS} shows that the associated Bloch--Srinivas conjecture depends only on $X$ and $\pi(|E|)$.
\end{remark}

\begin{remark}\label{remark_stabilisation}
The proof of Theorem \ref{theorem_BS} shows the following: the inverse system $F^dK_0(X',rE)$, $r\ge1$, stabilises if and only if the inverse system $F^dK_0(X,rY)$, $r\ge~1$, stabilises, in which case the canonical map $F^dK_0(X,rY)\to F^dK_0(X',rE)$ is an isomorphism for $r\gg1$.
\end{remark}

The following corollary recovers all previously known cases of the Bloch--Srinivas conjecture (normal surfaces \cite[Thm.~1.1]{Krishna2002}; Cohen--Macaulay varieties with isolated singularities in characteristic zero \cite[Thm.~1.1]{Krishna2006} \cite[Thm.~1.2]{Krishna2010}; note that in these cases one can use the reduction ideal trick of Weibel \cite{Weibel2001} to avoid assuming that $k$ has resolution of singularities, c.f., Remark \ref{remark_surfaces}):

\begin{corollary}\label{corollary_previous_results}
Let $X$ be a $d$-dimensional, integral variety over a good field $k$; let $\pi:X'\to X$ be a desingularisation, and $E\into X'$ any reduced closed subscheme covering the exceptional set. Assume $\pi(|E|)$ is finite and $d\ge2$.

Then the associated Bloch--Srinivas conjecture is true.
\end{corollary}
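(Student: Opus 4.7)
The plan is to invoke Theorem \ref{theorem_BS}(ii), which reduces the associated Bloch--Srinivas conjecture to showing that the canonical surjection
\[F^dK_0(X, rY) \twoheadrightarrow F^dK_0(X)\]
is an isomorphism for $r \gg 1$, where $Y := \pi(|E|)_\sub{red}$. Under our hypotheses, $Y$ is a $0$-dimensional reduced closed subscheme of codimension $d \ge 2$; in particular $X$ is regular in codimension one, each $rY$ is an Artinian scheme concentrated at finitely many closed points of $X$, and the Levine--Weibel moving-lemma machinery of Remark \ref{remark_Levine_Weibel} is at our disposal.

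For surjectivity, any generator $[x] \in F^dK_0(X)$ coming from a smooth closed point $x \in |Y|$ can be rewritten, via the moving-lemma isomorphism $\CH_0(X; X_\sub{sing} \cup Y) \isoto \CH_0(X)$ of Remark \ref{remark_Levine_Weibel}(ii), as a finite combination $\sum_i n_i[x_i]$ with each $x_i$ a smooth closed point avoiding $|Y|$; pushing this identity to $F^dK_0(X)$ through the cycle class map $\CH_0(X) \twoheadrightarrow F^dK_0(X)$ and lifting the right-hand side tautologically to $F^dK_0(X, rY)$ exhibits $[x]$ in the image.

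For injectivity, I would exploit the finiteness of $\pi(|E|)$. By quasi-projectivity, $Y$ sits in an affine open $U \subseteq X$, and Thomason--Trobaugh excision identifies $K_0(X, rY)$ with $K_0(U, rY)$; the relative $K$-theory long exact sequence $K_1(U) \to K_1(rY) \to K_0(U, rY) \to K_0(U)$ then identifies the kernel of $K_0(X, rY) \to K_0(X)$ with the cokernel of $K_1(U) \to K_1(rY)$. The key step is a separation of supports: on the semilocalization $\Spec \roi_{X, Y}$, cycle classes of closed points outside $Y$ vanish (no such closed points exist), whereas the image of $K_1(rY)$ in relative $K$-theory survives, so the intersection of $F^dK_0(X, rY)$ with the image of $K_1(rY)$ inside $K_0(X, rY)$ must be zero. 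This recovers, in our framework, the classical semilocal arguments of Bloch--Srinivas for normal surfaces and of Krishna for higher-dimensional isolated singularities, now subsumed by pro cdh-descent via Theorem \ref{theorem_BS}.

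The main obstacle is executing this separation argument integrally---that is, without losing a factor of $(d-1)!$, which would recover only the weaker conclusion of Theorem \ref{theorem_intro}(i) obtained via Levine's Chern class. What makes the argument work in the present setting, and fail in general, is precisely the $0$-dimensionality of $Y$: relations in relative $K$-theory beyond those already present in $K_0(X)$ arise from units on an Artinian scheme, and on a $d$-dimensional variety such classes cannot contribute to the codimension-$d$ cycle-theoretic subgroup $F^dK_0(X, rY)$ for purely dimension-theoretic reasons.
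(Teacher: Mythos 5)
Your first step---reducing, via Theorem \ref{theorem_BS}(ii), to showing that $F^dK_0(X,rY)\to F^dK_0(X)$ is an isomorphism for $Y=\pi(|E|)_\sub{red}$---is exactly the paper's route. At that point the paper simply quotes \cite[Lem.~3.1]{Krishna2006}, which says precisely that this map is an isomorphism whenever $rY$ is zero-dimensional, and stops. You instead attempt to reprove that input, and this is where the argument breaks down.

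The concrete error is the claim that ``Thomason--Trobaugh excision identifies $K_0(X,rY)$ with $K_0(U,rY)$'' for an affine open $U\supseteq |Y|$. There is no such excision: the homotopy fibre of $K(X)\to K(U)$ is the $K$-theory with supports $K^B(X\textrm{ on }X\setminus U)$ (Thomason--Trobaugh localisation), which is highly nontrivial, so $K_0(X,rY)\to K_0(U,rY)$ is in general neither injective nor surjective (take $Y=\emptyset$ to see the problem immediately). Relative $K$-theory of a closed subscheme does not satisfy excision in this sense---indeed the failure of excision is the entire reason the paper needs \emph{pro} cdh-descent and pro-excision theorems. Consequently your identification of $\ker\!\left(K_0(X,rY)\to K_0(X)\right)$ with $\op{coker}\!\left(K_1(U)\to K_1(rY)\right)$ is unjustified; the correct statement is that the kernel is the image of the boundary $\partial:K_1(rY)\to K_0(X,rY)$, i.e.\ $\op{coker}\!\left(K_1(X)\to K_1(rY)\right)$. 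The second, more serious, issue is that your ``separation of supports'' step is asserted rather than proven: it is not true ``for purely dimension-theoretic reasons'' that $\partial(K_1(rY))$ cannot meet $F^dK_0(X,rY)$---a priori it certainly can, and controlling this intersection is exactly the hard content of the Bloch--Srinivas problem. What makes the zero-dimensional case work is that $rY$ is a finite disjoint union of Artinian local schemes, so $SK_1(rY)=0$ and $K_1(rY)=\roi_{rY}^\times$ lifts to units on a neighbourhood of $|Y|$, and one then shows that the resulting boundary classes miss $F^dK_0(X,rY)$; this is the proof of Krishna's Lemma 3.1 and needs to be carried out (or cited), not waved at. A minor additional point: your moving-lemma and affine-open arguments invoke quasi-projectivity, which is not among the hypotheses of the corollary.
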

\begin{proof}
Set $Y:=\pi(|E|)_\sub{red}$. According to Theorem \ref{theorem_BS}, it is necessary and sufficient to show that the canonical map $F^dK_0(X,rY)\to F^dK_0(X)$ is an isomorphism for all $r\ge1$. But this follows from \cite[Lem.~3.1]{Krishna2006} since $rY$ is zero dimensional.
\end{proof}

The next corollary proves the Bloch--Srinivas conjecture under the assumption that the cycle class homomorphism $\CH_0(X)\to F^dK_0(X)$ is an isomorphism:

\begin{corollary}\label{corollary_Chow_isom}
Let $X$ be a $d$-dimensional, integral, quasi-projective variety over a good field $k$; let $\pi:X'\to X$ be a desingularisation, and $E\into X'$ any reduced closed subscheme covering the exceptional set. Assume $\op{codim}(X,\pi(|E|))\ge2$ and that the cycle class map $\CH_0(X)\to F^dK_0(X)$ is an isomorphism.

Then the associated Bloch--Srinivas conjecture is true.
\end{corollary}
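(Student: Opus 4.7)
The approach is to reduce, via Theorem \ref{theorem_BS}(ii), to showing that the canonical map $F^dK_0(X,rY)\to F^dK_0(X)$ is an isomorphism for all $r\ge1$, where $Y:=\pi(|E|)_\sub{red}$. The first observation is that $X_\sub{sing}\sseq \pi(|E|)$: if $S_0\sseq X$ is the smallest closed subset over which $\pi$ fails to be an isomorphism, then $X\setminus S_0$ is smooth, so $X_\sub{sing}\sseq S_0$; surjectivity of $\pi$ and the fact that $|E|$ covers the exceptional set $\pi^{-1}(S_0)$ then give $\pi(|E|)\supseteq S_0\supseteq X_\sub{sing}$. In particular $\pi(|E|)$ is a closed subset of codimension $\ge2$ containing $X_\sub{sing}$, so $X$ is regular in codimension one and $\pi(|E|)$ is a permissible choice of auxiliary support in the Levine--Weibel formalism.

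The main step is to interpose the Levine--Weibel Chow group $\CH_0(X;\pi(|E|))$ and exploit the commutative square of cycle class maps
\[\xymatrix{
\CH_0(X;\pi(|E|)) \ar[r]\ar[d] & F^dK_0(X,rY) \ar[d] \\
\CH_0(X) \ar[r] & F^dK_0(X)
}\]
in which the verticals are the obvious forgetful maps and the horizontals are the cycle class maps reviewed in Section \ref{subsection_LW}. I expect to verify that three of these four maps are isomorphisms essentially for free: the left vertical by the moving lemma in Remark \ref{remark_Levine_Weibel}(ii) (both $\pi(|E|)$ and $X_\sub{sing}$ having codimension $\ge2$ in the quasi-projective variety $X$); the bottom horizontal by the hypothesis of the corollary; and the top horizontal by matching generators, since the closed points disjoint from $|Y|\cup X_\sub{sing}$ that generate $F^dK_0(X,rY)$ are precisely the closed points of $X\setminus\pi(|E|)$ that generate $\CH_0(X;\pi(|E|))$, with Lemma \ref{lemma_easy_Chern_class} ensuring that the Chow relations (from curves disjoint from $\pi(|E|)$) are killed in $K$-theory.

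With these three facts in hand, a trivial diagram chase closes the argument: the bottom-left composite is an isomorphism that factors through the surjective top horizontal, forcing the top horizontal to be injective and hence an isomorphism, and therefore the right vertical $F^dK_0(X,rY)\to F^dK_0(X)$ to be an isomorphism. This holds for every $r\ge1$, which is condition (ii)(c) of Theorem \ref{theorem_BS}, so the Bloch--Srinivas conjecture follows.

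The main potential obstacle is really a bookkeeping one: confirming that the cycle class map $\CH_0(X;\pi(|E|))\to F^dK_0(X,rY)$ is both well-defined and surjective, which relies on $\pi(|E|)$ containing the union $|Y|\cup X_\sub{sing}$ --- but this is exactly what the codimension hypothesis and the discussion above provide. Everything else reduces to the moving lemma and a three-out-of-four diagram chase.
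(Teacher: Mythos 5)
Your proof is correct and essentially identical to the paper's: the same reduction via Theorem \ref{theorem_BS} to the statement that $F^dK_0(X,rY)\to F^dK_0(X)$ is an isomorphism for all $r\ge1$, followed by the same four-term diagram chase through $\CH_0(X;|Y|)\to\CH_0(X)$ using the moving lemma of Remark \ref{remark_Levine_Weibel}(ii), the hypothesis on the cycle class map, and the surjectivity of $\CH_0(X;|Y|)\to F^dK_0(X,rY)$. The only small caveat is that this last map is a priori only a surjection (matching generators and killing relations does not give injectivity), not an isomorphism ``for free'' as you initially list it; but your closing chase in fact uses only its surjectivity and then deduces injectivity, exactly as the paper does.
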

\begin{proof}
Set $Y=\pi(|E|)_\sub{red}$. According to Theorem \ref{theorem_BS}, it is necessary and sufficient to show that the canonical map $F^dK_0(X,rY)\to F^dK_0(X)$ is an isomorphism for all $r\ge1$. To prove this we consider the commutative diagram
\[\xymatrix{
F^dK_0(X,rY)\ar[r] & F^dK_0(X)\\
\CH_0(X;|Y|)\ar[r]\ar[u] & \CH_0(X)\ar[u]
}\]
The right vertical arrow is an isomorphism by assumption, the bottom horizontal arrow is an isomorphism by Remark \ref{remark_Levine_Weibel}(ii), and the left vertical arrow is a surjection since the domain and codomain are generated by the closed points of $X\setminus Y$. It follows that the top horizontal arrow (and left vertical arrow -- we will need this in the proof of Theorem \ref{theorem_modulus}) is an isomorphism, as desired.
\end{proof}

In particular, we have proved the Bloch--Srinivas conjecture for projective varieties over an algebraically closed field of characteristic zero which are regular in codimension one:

\begin{corollary}\label{corollary_algebraically_closed}
Let $X$ be a $d$-dimensional, integral variety over an algebraically closed field $k$ which has strong resolution of singularities; let $\pi:X'\to X$ be a desingularisation, and $E\into X'$ any reduced closed subscheme covering the exceptional set. Assume $\op{codim}(X,\pi(|E|))\ge2$ and that one of the following is true:
\begin{enumerate}\itemsep0pt
\item $X$ is projective and $\op{char}k=0$; or
\item $X$ is projective and $d\le\op{char}k\neq0$; or
\item $X$ is affine and $\op{char}k$ is arbitrary.
\end{enumerate}
Then the associated Bloch--Srinivas conjecture is true.
\end{corollary}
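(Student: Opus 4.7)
The plan is to simply combine Corollary \ref{corollary_Chow_isom} with Theorem \ref{theorem_Levine}, both of which have already been established. Specifically, Theorem \ref{theorem_Levine} states exactly that in each of the three cases (i)--(iii), the cycle class homomorphism $\CH_0(X)\to F^dK_0(X)$ is an isomorphism, which is the hypothesis needed to invoke Corollary \ref{corollary_Chow_isom}.

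First I would check that the remaining hypotheses of Corollary \ref{corollary_Chow_isom} are in place. The field $k$ is algebraically closed, hence infinite and perfect, and by assumption has strong resolution of singularities; thus $k$ is good. The variety $X$ is quasi-projective in all three cases (projective or affine). Since $X'$ is smooth and $\pi$ restricts to an isomorphism over $X\setminus \pi(|E|)$, the singular locus $X_\sub{sing}$ is contained in $\pi(|E|)$; the assumption $\op{codim}(X,\pi(|E|))\ge 2$ therefore ensures that $X$ is regular in codimension one, so that the Levine--Weibel Chow group $\CH_0(X)$ is defined and Corollary \ref{corollary_Chow_isom} applies.

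Next, in each case I would quote the appropriate part of Theorem \ref{theorem_Levine}: part (i) of Theorem \ref{theorem_Levine} handles case (i), part (ii) of Theorem \ref{theorem_Levine} handles case (ii), and part (iii) handles case (iii). In all three situations we conclude that $\CH_0(X)\to F^dK_0(X)$ is an isomorphism. Corollary \ref{corollary_Chow_isom} then yields the Bloch--Srinivas conjecture for $X$, as required.

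There is no real obstacle here, since this corollary is purely a packaging of earlier results. The only subtlety worth mentioning is the verification that $X$ is regular in codimension one, which is automatic from the codimension hypothesis on $\pi(|E|)$; everything else is a direct invocation.
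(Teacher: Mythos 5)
Your proposal is correct and matches the paper's own proof, which likewise deduces the result directly from Corollary \ref{corollary_Chow_isom} together with the cases of Theorem \ref{theorem_Levine}. The extra verifications you spell out (that $k$ is good, that $X$ is quasi-projective, and that the codimension hypothesis forces $X$ to be regular in codimension one) are exactly the implicit checks the paper leaves to the reader.
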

\begin{proof}
This follows from Corollary \ref{corollary_Chow_isom} and the results of Levine and Srinivas recalled in Theorem \ref{theorem_Levine}.
\end{proof}

\begin{remark}
It seems plausible that some descent or base change technique should eliminate the requirement in Corollary \ref{corollary_algebraically_closed} that $k$ be algebraically closed.
\end{remark}

We can also solve the Bloch--Srinivas conjecture up to $(d-1)!$-torsion whenever $X$ is regular in codimension one:

\begin{corollary}
Let $X$ be a $d$-dimensional, integral, quasi-projective variety over a good field $k$; let $\pi:X'\to X$ be a desingularisation, and $E\into X'$ any reduced closed subscheme covering the exceptional set. Assume $\op{codim}(X,\pi(|E|))\ge2$. 

Then the associated Bloch--Srinivas conjecture is true up to $(d-1)!$-torsion, i.e., the maps \[\BS_r:F^dK_0(X',rE)\otimes\bb Z[\tfrac{1}{(d-1)!}]\To F^dK_0(X)\otimes\bb Z[\tfrac{1}{(d-1)!}]\] are isomorphisms for $r\gg1$.
\end{corollary}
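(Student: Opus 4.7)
The plan is to mimic the argument of Corollary \ref{corollary_Chow_isom} after tensoring everything with $\bb Z[\tfrac{1}{(d-1)!}]$. Set $Y:=\pi(|E|)_\sub{red}$. My first step would be to observe that Theorem \ref{theorem_BS} carries over verbatim to the $\bb Z[\tfrac{1}{(d-1)!}]$-linear setting: its proof rests on the pro cdh isomorphism $\{F^dK_0(X,rY)\}_r\cong\{F^dK_0(X',rE)\}_r$ together with formal factorisation and surjectivity arguments, each of which is preserved by tensoring with the flat $\bb Z$-algebra $\bb Z[\tfrac{1}{(d-1)!}]$. Hence it suffices to show that the natural map $F^dK_0(X,rY)\to F^dK_0(X)$ becomes an isomorphism after inverting $(d-1)!$ for every $r\ge 1$.

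For this I would invoke the commutative square
\[\xymatrix{
F^dK_0(X,rY)\ar[r]^a & F^dK_0(X)\\
\CH_0(X;|Y|)\ar[r]^b\ar@{->>}[u]^c & \CH_0(X)\ar@{->>}[u]_d
}\]
from the proof of Corollary \ref{corollary_Chow_isom}. The map $b$ is an integral isomorphism by Remark \ref{remark_Levine_Weibel}(ii), using the codimension hypothesis on $\pi(|E|)$, while $c$ and $d$ are surjective by construction. The essential new input is Levine's Chern class $ch_0:F^dK_0(X)\to\CH_0(X)$, whose two compositions with $d$ are both multiplication by $(-1)^{d-1}(d-1)!$; so $d$ becomes an isomorphism after inverting $(d-1)!$. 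A routine chase --- lifting any element of $\ker(a\otimes\bb Z[\tfrac{1}{(d-1)!}])$ along the surjection $c$ and pushing across via $b$ and $d$ --- then shows that $a$ itself is an isomorphism after inverting $(d-1)!$.

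The one step worth pausing over is the reduction in the first paragraph: it is not automatic that inverting $(d-1)!$ converts an integral ``iff'' into one for the inverted modules. However, the proof of Theorem \ref{theorem_BS} does not merely assert an abstract equivalence; it produces concrete morphisms and surjections of pro abelian groups, all of which survive flat base change, so the chase goes through unchanged over $\bb Z[\tfrac{1}{(d-1)!}]$. Once this bookkeeping is settled, the desired conclusion --- that $\BS_r$ is an isomorphism after inverting $(d-1)!$ for $r\gg 1$ --- follows at once.
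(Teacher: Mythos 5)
Your proposal is correct and follows essentially the same route as the paper: reduce via (a localised version of) Theorem \ref{theorem_BS} to showing that $F^dK_0(X,rY)\to F^dK_0(X)$ is an isomorphism after inverting $(d-1)!$, and then run the diagram chase of Corollary \ref{corollary_Chow_isom}, with Levine's Chern class $ch_0$ supplying the fact that the cycle class map becomes an isomorphism after inverting $(d-1)!$. Your extra remark justifying the flat base change in Theorem \ref{theorem_BS} is a reasonable elaboration of what the paper calls a ``trivial modification.''
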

\begin{proof}
Set $Y=\pi(|E|)_\sub{red}$. By a trivial modification of Theorem \ref{theorem_BS}, it is necessary and sufficient to show that the canonical map $F^dK_0(X,rY)\to F^dK_0(X)$ is an isomorphism for all $r\ge1$ after inverting $(d-1)!$. This follows exactly as in Corollary \ref{corollary_Chow_isom}, since the cycle class map $\CH_0(X)\to F^dK_0(X)$ is an isomorphism after inverting $(d-1)!$, thanks to the existence of Levine Chern class $ch_0:F^dK_0(X)\to\CH_0(X)$.
\end{proof}

The next result solves the Bloch--Srinivas conjecture up to a finite group when the singular locus $X_\sub{sing}$ has codimension $\ge2$ and is contained in an affine open of $X$. Note that the ``obvious'' cases in which this happens, namely when $X_\sub{sing}$ is finite or $X$ itself is affine, are already largely covered by Corollaries \ref{corollary_previous_results} and \ref{corollary_algebraically_closed}(iii) respectively:

\begin{corollary}\label{corollary_up_to_finite}
Let $X$ be a $d$-dimensional, integral, quasi-projective variety over an algebraically closed field $k$ which has strong resolution of singularities; let $\pi:X'\to X$ be a desingularisation, and $E\into X'$ any reduced closed subscheme covering the exceptional set. Assume $\op{codim}(X,\pi(|E|))\ge2$, that $X_\sub{sing}$ is contained in an affine open of $X$, and moreover that $d\le\op{char} k$ if $\op{char} k\neq 0$.

Then the maps \[\BS_r:F^dK_0(X',rE)\To F^dK_0(X)\] are surjective with finite kernel for $r\gg1$, and the inverse system $F^dK_0(X',rE)$, $r\ge1$, stabilises.
\end{corollary}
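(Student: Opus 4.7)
The plan is to combine Theorem \ref{theorem_BS} (and Remark \ref{remark_stabilisation}) with the ``partial'' cycle class isomorphism supplied by Theorem \ref{theorem_Levine}(iv)--(v). Set $Y:=\pi(|E|)_\sub{red}$. By Remark \ref{remark_stabilisation}, showing that $F^dK_0(X',rE)$ stabilises is equivalent to showing that $F^dK_0(X,rY)$ stabilises, and in that case the stable values agree; once we also understand the canonical map $F^dK_0(X,rY)\to F^dK_0(X)$, the diagram of Theorem \ref{theorem_BS}(i) identifies $\BS_r$ for $r\gg1$ with that map.

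The central tool is the commutative square
\[\xymatrix{
\CH_0(X;|Y|)\ar@{->>}[r]\ar[d]_\sim & F^dK_0(X,rY)\ar[d]\\
\CH_0(X)\ar[r] & F^dK_0(X)
}\]
in which the left vertical arrow is an isomorphism by Remark \ref{remark_Levine_Weibel}(ii) (as $|Y|\supseteq X_\sub{sing}$ has codimension $\ge2$), the top horizontal arrow is surjective because both groups are generated by the closed points of $X\setminus Y$, and the bottom horizontal arrow is a surjection with finite kernel $N\sseq\CH_0(X)$ by Theorem \ref{theorem_Levine}(iv)--(v). The hypotheses on $X_\sub{sing}$ and on $\op{char}k$ are precisely what is needed to invoke this last input.

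Write $K_r$ for the kernel of the composite surjection $\CH_0(X)\cong\CH_0(X;|Y|)\onto F^dK_0(X,rY)$. Commutativity of the square gives $K_r\sseq N$, so each $K_r$ lies in the finite group $N$. For $r\ge s\ge1$ the restriction map $F^dK_0(X,rY)\onto F^dK_0(X,sY)$, coming from $sY\into rY$ on relative $K$-theory and sending a closed point of $X\setminus Y$ to its cycle class, is surjective; this forces $K_r\sseq K_s$. Hence the $K_r$ form a decreasing chain in the finite lattice of subgroups of $N$, and therefore stabilise at some $K_\infty\sseq N$ for $r\gg1$. Consequently $F^dK_0(X,rY)\cong\CH_0(X)/K_\infty$ for $r\gg1$, and the canonical map to $F^dK_0(X)\cong\CH_0(X)/N$ becomes the quotient $\CH_0(X)/K_\infty\onto\CH_0(X)/N$, which is surjective with finite kernel $N/K_\infty$.

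Transporting this back through Theorem \ref{theorem_BS}(i) and Remark \ref{remark_stabilisation} simultaneously yields both claims of the corollary: $F^dK_0(X',rE)$ stabilises (to $\CH_0(X)/K_\infty$), and $\BS_r$ is identified, for $r\gg1$, with the surjection $\CH_0(X)/K_\infty\onto\CH_0(X)/N$, whose kernel $N/K_\infty$ is finite. I do not expect any genuine obstacle: the only mildly technical point is the monotonicity $K_r\sseq K_s$ for $r\ge s$, which reduces to compatibility of cycle classes with the restriction maps on relative $K$-theory.
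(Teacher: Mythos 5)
Your proposal is correct and follows essentially the same route as the paper: both reduce to the diagram combining Theorem \ref{theorem_BS} with the surjection-with-finite-kernel $\CH_0(X)\to F^dK_0(X)$ of Theorem \ref{theorem_Levine}(iv)--(v) and Remark \ref{remark_Levine_Weibel}(ii), and both deduce stabilisation from finiteness of that kernel. The only cosmetic difference is that you track the kernels $K_r$ as a decreasing chain of subgroups of $N$ upstairs in $\CH_0(X)$, whereas the paper tracks the kernels of $\BS_r$ as a tower of quotients of the finite group $\Lambda$; these are the same data ($\Lambda_r=N/K_r$).
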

\begin{proof}
We concatenate commutative diagrams we have already considered in Theorem \ref{theorem_BS} and Corollary \ref{corollary_Chow_isom}:
\[\xymatrix{
F^dK_0(X',rE) \ar@/^10pt/[dr]^{\BS_r}&\\
F^dK_0(X,rY)\ar@{->>}[u]\ar[r]& F^dK_0(X)\\
\CH_0(X;|Y|)\ar[r]^\cong\ar@{->>}[u] & \CH_0(X)\ar[u]
}\]
The left vertical arrows are surjective since the groups are generated by the closed points of $X\setminus Y=X'\setminus E$; the bottom horizontal arrow is an isomorphism by Remark \ref{remark_Levine_Weibel}(ii); the right vertical arrow is surjective with finite kernel $\Lambda$ by the result of Barbieri Viale recalled in Theorem \ref{theorem_Levine}.

A simple diagram chase shows that $\BS_r$ is surjective and that its kernel $\Lambda_r$ is naturally a quotient of $\Lambda$. Since $\Lambda$ is finite, this tower of quotients $\Lambda_r$ must eventually stabilise, completing the proof.
\end{proof}

\begin{remark}
We finish our discussion of the Bloch--Srinivas conjecture with a remark about $SK_1$. Let $\pi:X'\to X$, $E$, $Y$, $k$ be as in the statement of Theorem \ref{theorem_BS}, and assume $X$ is quasi-projective and $\op{codim}(X,Y)\ge2$.

The maps $F^dK_0(X,rY)\to F^dK_0(X)$ are surjective for all $r\ge1$ (by Remark \ref{remark_Levine_Weibel}(ii) and existence of the cycle class maps); hence we may add \begin{enumerate}\item[\em(b$^\prime$)] {\em The canonical map $F^dK_0(X,rY)\to F^dK_0(X)$ is injective for $r\gg1$.}\end{enumerate} to the list of equivalent conditions in Theorem \ref{theorem_BS}(ii).

Next, it follows from \cite[Lem.~3.1]{Krishna2006} that (b$^\prime$) (hence the associated Bloch--Srinivas conjecture) would follow from showing that $\bor(SK_1(rY))=0$, where $\bor:K_1(rY)\to K_0(X,rY)$ is the boundary map and $SK_1(rY):=\ker(K_1(rY)\onto H^0(rY,\roi_{rY}^\times))$; equivalently, it is enough to show that $SK_1(X)\to SK_1(rY)$ is surjective. Using the arguments of Theorem \ref{theorem_BS} it would even be enough to show, for each $r\gg1$, that \[\op{Im}(SK_1(sY)\to SK_1(rY))\subseteq\op{Im}(SK_1(X)\to SK_1(rY))\] for some $s\ge r$. It is not clear whether one should expect this to be true.
\end{remark}

We finish the section with some consequence of the Bloch--Srinivas conjecture. The following result about Chow groups of cones was conjectured by Srinivas \cite[\S3]{Srinivas1987a} in 1987; it was proved by Krishna \cite[Thm.~1.5]{Krishna2010} under the assumption that the cone $X$ was normal and Cohen--Macaulay, and we will combine his argument with Theorem \ref{theorem_BS} to establish the result in full generality; due to the failure of Kodaira vanishing in finite characteristic we must restrict to characteristic zero:

\begin{theorem}\label{theorem_Srinivas_2}
Let $Y\into\bb P_k^N$ be a $d$-dimensional, smooth, projective variety over an algebraically closed field $k$ of characteristic zero; assume $d>0$ and $H^d(Y,\roi_Y(1))=0$, and let $X$ be the affine cone over $Y$. Then $\CH_0(X)=0$.
\end{theorem}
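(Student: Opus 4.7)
The plan is to use the Bloch--Srinivas conjecture to transfer the vanishing problem from the singular cone $X$ to its natural resolution, where the line bundle geometry together with the cohomological hypothesis on $Y$ can be exploited. Let $\pi\colon X'\to X$ be the blow-up of $X$ at the vertex $v$. Then $X'$ is smooth: it is the total space $p\colon X'\to Y$ of the line bundle $\roi_Y(-1)$, so $\pi$ is a desingularisation with exceptional fibre $E\cong Y$, and $\pi(|E|)=\{v\}$ has codimension $d+1\ge 2$ in $X$. Since $X$ is an integral, affine, quasi-projective variety over an algebraically closed field of characteristic zero which is regular in codimension one, Corollary \ref{corollary_algebraically_closed}(iii) yields the Bloch--Srinivas isomorphism $\BS_r\colon F^dK_0(X',rE)\isoto F^dK_0(X)$ for $r\gg 1$, while Theorem \ref{theorem_Levine}(iii) supplies $\CH_0(X)\isoto F^dK_0(X)$. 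Concatenating these, the theorem reduces to showing that $F^dK_0(X',rE)=0$ for some $r\gg 1$.

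Next, I would exploit the line bundle structure $p\colon X'\to Y$. Under $p_*\roi_{X'}\cong\bigoplus_{n\ge 0}\roi_Y(n)$ the thickening $rE$ corresponds to the truncation $\bigoplus_{n=0}^{r-1}\roi_Y(n)$. The hypothesis $H^d(Y,\roi_Y(1))=0$ is equivalent by Serre duality to $H^0(Y,\omega_Y(-1))=0$; multiplication by nonzero sections of $\roi_Y(n-1)$, which exist for $n\ge 1$ since $\roi_Y(1)$ is very ample, then forces $H^0(Y,\omega_Y(-n))=0$, i.e.\ $H^d(Y,\roi_Y(n))=0$, for every $n\ge 1$. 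This cohomological vanishing is precisely the input required by the argument of Krishna \cite[Thm.~1.5]{Krishna2010}: via the long exact $K$-theory sequence for the pair $(X',rE)$, Goodwillie's identification of the relative $K$-theory of a nilpotent thickening with negative cyclic homology (valid in characteristic zero), and the projection formula along the affine morphism $p$, one concludes $F^dK_0(X',rE)=0$ for all $r\gg 1$.

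The main obstacle is this last cohomological computation, but it depends only on the smooth geometry of the line bundle $p\colon X'\to Y$ and the vanishing of $H^d(Y,\roi_Y(n))$ for $n\ge 1$, so Krishna's argument goes through essentially verbatim without requiring $X$ itself to be normal or Cohen--Macaulay. The genuinely new ingredient here is Corollary \ref{corollary_algebraically_closed}(iii): it supplies the Bloch--Srinivas isomorphism for arbitrary affine cones, and thereby drops the Cohen--Macaulay and normality hypotheses present in Krishna's earlier formulation.
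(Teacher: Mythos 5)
Your proposal follows essentially the same route as the paper: resolve the cone as the total space of the line bundle $\roi_Y(-1)$ over $Y$ with the zero section $E\cong Y$ as exceptional fibre, combine Corollary \ref{corollary_algebraically_closed}(iii) (or Corollary \ref{corollary_previous_results}) with the cycle class isomorphism to reduce to the vanishing of $F^{d+1}K_0(X',rE)$ for $r\gg1$ (noting $F^{d+1}K_0(X')=0$ by homotopy invariance of $\CH_0$), and then invoke Krishna's computation for the thickened zero section. The one caveat is that the cohomological input to Krishna's argument is not only $H^d(Y,\roi_Y(n))=0$ for $n\ge1$ but the vanishing of $H^d\bigl(rE,\Omega^d_{(rE,E)}/d\Omega^{d-1}_{(rE,E)}\bigr)$, whose graded pieces also involve $H^d(Y,\Omega^p_Y(n))$ for $p\ge1$ and hence require the Akizuki--Nakano vanishing theorem --- this, rather than Goodwillie's theorem, is where characteristic zero genuinely enters.
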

\begin{proof}
We may resolve $X$, which has a unique singular point, to obtain $X'$ which is a line bundle over over $Y$, of which the zero section is the exceptional fibre of the resolution $X'\to X$. By Corollary \ref{corollary_previous_results} or \ref{corollary_algebraically_closed}(iii), we know that $\CH_0(X)\cong F^{d+1}K_0(X',rY)$ for $r\gg1$; moreover, $\CH_0(X')$ surjects onto $F^dK_0(X')$, and $\CH_0(X')=0$ since $X'$ is a line bundle, so $F^dK_0(X')=0$. So it is enough to show that the canonical map $F^{d+1}K_0(X',rY)\to F^{d+1}K_0(X')$ is an isomorphism. According to Krishna's proof of \cite[Cor.~8.5]{Krishna2010}, this would follows from knowing that:
\begin{enumerate}
\item $H^d(X',\cal K_{d,X'})\otimes k^\times\To H^d(Y,\cal K_{d,Y})\otimes k^\times$ is surjective; and
\item $H^d\left(rY,\frac{\Omega^d_{(rY,Y)}}{d\Omega^{d-1}_{(rY,Y)}}\right)=0$ for $r\gg1$.
\end{enumerate}
Condition (i) is satisfied since the zero section $Y\into X'$ is split by the line bundle structure map $X'\to Y$. Condition (ii) is deduced from the Akizuki--Nakano vanishing theorem, as explained in Lem.~9.1 and the proof of Thm.~1.5 in \cite{Krishna2010}.
\end{proof}

\begin{corollary}\label{corol_affine0}
Let $Y,k$ be as in the previous theorem, and let $A$ be its homogeneous coordinate ring. Then every projective module over $A$ of rank at least $d$ has a free direct summand of rank one.
\end{corollary}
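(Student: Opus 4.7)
The plan is to combine the vanishing $\CH_0(X) = 0$ provided by Theorem \ref{theorem_Srinivas_2} (where $X := \Spec A$, a variety of dimension $d+1$) with Murthy's theorem from \cite{Murthy1994} relating top Chern classes of projective modules to the existence of unimodular elements.

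First, since $X$ is affine over an algebraically closed field of characteristic zero and is regular in codimension one (its only singular point being the cone vertex), Theorem \ref{theorem_Levine}(iii) shows that the cycle class map $\CH_0(X) \to F^{d+1} K_0(X)$ is an isomorphism; combined with Theorem \ref{theorem_Srinivas_2} this forces $F^{d+1} K_0(X) = 0$ as well.

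Next, for a projective $A$-module $P$ of rank $r \ge d$, there are three ranges to handle. If $r > \dim A = d+1$, Serre's classical splitting theorem unconditionally produces a unimodular element in $P$, equivalently a rank-one free direct summand. If $r = d+1$, Murthy's theorem in \cite{Murthy1994} asserts that $P$ splits off a free rank-one summand if and only if its top Chern class $c_{d+1}(P) \in F^{d+1}K_0(A)$ vanishes; this is automatic by the vanishing established above.

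The main obstacle is the borderline case $r = d$, one below $\dim A$, where the plain form of Murthy's theorem does not directly apply. Here one should exploit the \emph{graded} structure of $A$, equivalently the $\mathbb{G}_m$-equivariance of $X$, and invoke the corresponding refinements worked out by Krishna in \cite{Krishna2010}, which treat precisely such sub-maximal ranks over affine cones by inductive or cancellation-style arguments that reduce to the standard Murthy setup. In every instance, the essential input remains the vanishing $\CH_0(X) = 0$ provided by Theorem \ref{theorem_Srinivas_2}; the contribution of the present paper is that vanishing, while the module-theoretic machinery is borrowed from Krishna and Murthy.
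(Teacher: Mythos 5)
Your overall strategy is the same as the paper's, which simply combines the vanishing $\CH_0(X)=0$ from Theorem \ref{theorem_Srinivas_2} with a single citation to Murthy, namely \cite[Cor.~3.9]{Murthy1994}. Your handling of the ranges $r>d+1$ (Serre) and $r=d+1=\dim A$ (Murthy's top Chern class criterion plus the vanishing of $F^{d+1}K_0(A)$, which indeed follows from $\CH_0(X)=0$ via Theorem \ref{theorem_Levine}(iii)) is correct.

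The genuine gap is exactly where you flag it: the case $r=d=\dim A-1$, which is the only case that makes the corollary non-trivial. There you give no argument, only a deferral to unspecified ``refinements worked out by Krishna in \cite{Krishna2010}.'' That reference does not carry the weight you place on it: the module-theoretic statement covering sub-maximal rank over graded algebras is Murthy's own Corollary~3.9 in \cite{Murthy1994} (this is what both the present paper and Krishna's applications cite), which exploits the graded structure of $A$ to descend one step below the dimension, deducing from $F^nK_0(A)=0$ (with $n=\dim A=d+1$) that every projective module of rank $\ge n-1$ has a unimodular element. Your instinct that the grading/$\bb G_m$-equivariance is the extra input needed at rank $d$ is right, but as written the proposal neither proves that case nor points to the result that actually does, so the borderline case remains unestablished.
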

\begin{proof}
This follows from Theorem \ref{theorem_Srinivas_2} using a result of R.~Murthy \cite[Cor.~3.9]{Murthy1994}.
\end{proof}

\begin{corollary}\label{corol_affine}
Let $k$ be an algebraically closed field of characteristic zero, and $f\in k[\ul t]:=k[t_0,\dots,t_d]$ a homogenous polynomial of degree at most $d+1$ which defines a smooth hypersurface in $\bb P_k^d$. Then every smooth closed point of $\Spec k[\ul t]/\pid f$ is a complete intersection.

In other words, if $\frak m$ is any maximal ideal of $k[\ul t]$ containing $f$ other than the origin, then $\frak m=\pid{f,f_1,\dots,f_d}$ for some $f_1,\dots,f_d\in k[\ul t]$.
\end{corollary}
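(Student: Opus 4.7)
The plan is to reduce to Theorem \ref{theorem_Srinivas_2} and then invoke a theorem of Murthy characterising when a smooth closed point of an affine variety is a complete intersection in terms of the vanishing of zero cycles. Set $A := k[\ul t]/\pid f$ and $X := \Spec A$, the affine cone over the smooth projective hypersurface $Y := V(f) \subset \bb P^d_k$, which has dimension $d - 1$.

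First I verify that Theorem \ref{theorem_Srinivas_2} applies to $Y$. Assume $d \ge 2$; the case $d = 1$ is an elementary verification on a smooth affine curve. The required vanishing is $H^{d-1}(Y, \roi_Y(1)) = 0$. By the adjunction formula, $\omega_Y \cong \roi_Y(\deg f - d - 1)$, and so Serre duality on the $(d-1)$-dimensional smooth projective $Y$ rewrites the desired cohomology group as the dual of $H^0(Y, \roi_Y(\deg f - d - 2))$. Since $\deg f \le d + 1$, this twist has strictly negative degree, and it has no nonzero global sections on the connected positive-dimensional projective variety $Y$ (any such section, multiplied by a nonzero section of a sufficiently positive very ample twist, would give a nowhere-vanishing section of an ample line bundle). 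Hence Theorem \ref{theorem_Srinivas_2} yields $\CH_0(X) = 0$.

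Second, I translate this vanishing into the asserted complete-intersection property. The cycle class map $\CH_0(X) \onto F^d K_0(X)$ is surjective, so $F^d K_0(X) = 0$ as well. I then invoke a theorem of Murthy \cite{Murthy1994}: on a reduced $d$-dimensional affine variety over an algebraically closed field whose $F^d K_0$ vanishes, the maximal ideal of every smooth closed point is generated by exactly $d$ elements. The given maximal ideal $\frak m \subset k[\ul t]$ is not the origin, so its image $\frak p := \frak m / \pid f$ in $A$ is the maximal ideal of a smooth closed point of $X$. Applying Murthy's theorem, $\frak p = (\overbar{f_1}, \ldots, \overbar{f_d})$ for suitable $f_1, \ldots, f_d \in k[\ul t]$, and lifting yields $\frak m = \pid{f, f_1, \ldots, f_d}$.

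The main obstacle is the legitimacy of invoking Murthy's theorem in our singular setting, since the cone $X$ is singular at the vertex and many of Murthy's statements are phrased for smooth affine varieties. The key tools, however -- Bass cancellation, Eisenbud--Evans-type generation estimates, and the Euler-class construction for ideals of smooth closed points (which have finite projective dimension even when the ambient ring is singular elsewhere) -- apply equally to any reduced noetherian affine $k$-algebra, and the decisive input is precisely the vanishing of $F^d K_0$ of the singular affine variety $X$ that Theorem \ref{theorem_Srinivas_2} provides. In effect, this is the same deduction used to derive Corollary \ref{corol_affine0} from Theorem \ref{theorem_Srinivas_2}, now applied to the rank-one ideal of a smooth closed point rather than to high-rank projective modules.
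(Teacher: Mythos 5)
Your proposal is correct and is essentially the paper's own proof, which is the one-line deduction from Theorem \ref{theorem_Srinivas_2} together with Murthy's Theorem 4.4 applied to the $d$-dimensional reduced affine algebra $k[\ul t]/\pid{f}$; you have additionally supplied the (correct) verification of the hypothesis $H^{d-1}(Y,\roi_Y(1))=0$ by adjunction and Serre duality, which the paper leaves implicit. The only quibble is that for $d=1$ and $\deg f=2$ the cone is a union of two lines rather than a smooth affine curve, but that degenerate case still checks out directly and is immaterial to the argument.
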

\begin{proof}
This also follows from Theorem \ref{theorem_Srinivas_2} thanks to Murthy \cite[Thm.~4.4]{Murthy1994}.
\end{proof}

\section{Chow groups with modulus}\label{subsection_modulus}
If $X$ is a smooth variety over a field $k$, and $D$ is an effective divisor on $X$, then the Chow group $\CH_0(X;|D|)$ from Definition \ref{definition_LW} may be a rather coarse invariant, as there may not be enough curves on $X$ avoiding the codimension-one subset $|D|$. Of greater interest is $\CH_0(X;D)$, the Chow group of zero cycles on $X$ with modulus $D$, which we will define precisely in Definition \ref{definition_modulus}; note the notational difference, indicating that $\CH_0(X;D)$ depends not only on the support of $D$, but on its schematic, and possibly non-reduced, structure.

According to the higher dimensional class field theory of M.~Kerz and S.~Saito, when $k$ is finite and $X$ is proper over $k$, the group $\CH_0(X;D)$ classifies the abelian \'etale covers of $X\setminus D$ whose ramification is bounded by $D$; we refer the reader to \cite{KerzSaito2013} for details since we will not require any of their results.

We now turn to definitions, and refer again to [op.~cit.] for a more detailed exposition. Let $C$ be a smooth curve over a field $k$, and $D$ an effective divisor on $C$; writing $D=\sum_{x\in |D|}m_xx$ as a Weil divisor, we let \[k(C)_D^\times:=\{f\in k(C)^\times:\op{ord}_x(f-1)\ge m_x\mbox{ for all }x\in |D|\}\] denote the rational functions on $C$ which are $\equiv 1$ mod $D$. More generally, if $X$ is a smooth variety over $k$ and $D$ is an effective divisor on $X$, then for any curve $C\into X$ which is not contained in $|D|$ we write \[k(C)_D^\times:=k(\tilde C)^\times_{\phi^*D},\] where $\phi:\tilde C\to C\into X$ is the resulting map from the normalisation $\tilde C$ to $X$; evidently $k(C)_D^\times=k(C)^\times$ if $C$ does not meet $|D|$.

The Chow group with modulus is defined as follows:

\begin{definition}\label{definition_modulus}
Let $X$ be a smooth variety over $k$, and $D$ an effective divisor on $X$. Then the associated {\em Chow group of zero cycles of $X$ with modulus $D$} is \[\CH_0(X;D):=\frac{\mbox{free abelian group on closed points of $X\setminus D$}}{\pid{(f)_C\,:\,\mbox{$C\into X$ a curve not contained in $|D|$, and $f\in k(C)_D^\times$}}}\] where $(f)_C=\sum_{x\in C_0}\op{ord}_x(f)\,x$.
\end{definition}

If we were to define \[k(C)_{|D|}^\times:=\begin{cases}k(C)^\times &\mbox{if $C$ does not meet $|D|$,}\\1&\mbox{if $C$ meets $|D|$,}\end{cases}\] and repeat Definition \ref{definition_modulus} with $|D|$ in place of $D$, then the resulting group $\CH_0(X;|D|)$ would coincide with that defined in Definition \ref{definition_LW}. Since $k(C)^\times_{|D|}\subseteq k(C)^\times_D$, we thus obtain a canonical surjection \[\CH_0(X;|D|)\Onto\CH_0(X;D).\] One sense in which $\CH_0(X;D)$ is a more refined invariant than $\CH_0(X;|D|)$ is that the cycle class homomorphism $\CH_0(X;|D|)\to K_0(X,D)$ of Section \ref{subsection_LW} factors through $\CH_0(X;D)$. There does not appear to be a proof of this important result in the literature, so we give one here, beginning with a much stronger result in the case of curves:

\begin{lemma}\label{lemma_class_group_of_curve}
Let $C$ be a smooth curve over a field $k$, and $D$ an effective divisor on $C$. Then the canonical map \[\mbox{\em free abelian group on closed points of }C\setminus D\;\To\; K_0(C,D),\quad x\Mapsto[x]\] induces an injective cycle class homomorphism \[\CH_0(C;D)\To K_0(C,D),\] which is an isomorphism if $D\neq 0$ (and has cokernel $=\bb Z$ if $D=0$).
\end{lemma}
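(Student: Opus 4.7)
My plan is first to dispatch well-definedness via a localization sequence, and then to identify $K_0(C,D)$ with $\Pic(C,D)$ when $D\neq 0$ by comparing parallel short exact sequences.

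For well-definedness, take $f\in k(C)_D^\times$ and let $Z=|\op{div}(f)|$. Since $f$ is a unit modulo $D$, the set $Z$ is disjoint from $|D|$, so the closed embedding $Z\into C$ avoids $D$; excision then identifies $K_*(C\text{ on }Z,D)\cong K_*(Z)$, and the resulting localization long exact sequence
\[K_1(C\setminus Z,D)\xrightarrow{\partial}K_0(Z)\xrightarrow{\iota_*}K_0(C,D)\to K_0(C\setminus Z,D)\to 0\]
contains a preimage of the cycle of $f$: the unit $f\in K_1(C\setminus Z)$ lifts to $K_1(C\setminus Z,D)$ because $f|_D=1$, and the Gersten description of the boundary gives $\partial(f)=\sum_{x\in Z}\op{ord}_x(f)[\roi_x]$. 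Exactness at $K_0(Z)$ then forces this sum to vanish in $K_0(C,D)$, which is precisely the relation needed for the cycle class map to descend to $\CH_0(C;D)$.

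For the comparison, I would use the long exact sequence of relative $K$-theory,
\[K_1(C)\to K_1(D)\to K_0(C,D)\to K_0(C)\to K_0(D)\to 0.\]
Since $D$ is Artinian, $K_1(D)=\Gamma(D,\roi_D^\times)$ and $K_0(D)$ is the free abelian group on the closed points of $|D|$; since $C$ is a smooth curve, $K_0(C)=\bb Z\oplus\Pic(C)$ via rank and determinant, with pullback to $K_0(D)$ the diagonal on rank because line bundles trivialize on Artinian $D$. For $D\neq 0$ this pullback is injective on ranks, so $\ker(K_0(C)\to K_0(D))=\Pic(C)$. The surjective determinant $K_1(C)\onto\Gamma(\roi_C^\times)$ factors the pullback $K_1(C)\to K_1(D)$, so $\op{coker}(K_1(C)\to K_1(D))=\op{coker}(\Gamma(\roi_C^\times)\to\Gamma(\roi_D^\times))$. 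The resulting short exact sequence for $K_0(C,D)$ matches termwise the classical one
\[0\to\op{coker}(\Gamma(\roi_C^\times)\to\Gamma(\roi_D^\times))\to\Pic(C,D)\to\Pic(C)\to 0,\]
and the determinant on perfect complexes furnishes a compatible map of short exact sequences, so the five lemma yields $K_0(C,D)\isoto\Pic(C,D)$.

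By inspection, $\Pic(C,D)$ is the free abelian group on closed points of $C\setminus D$ modulo principal divisors of rational functions $\equiv 1\bmod D$, i.e., $\CH_0(C;D)$; and the class $[\roi_x]\in K_0(C,D)$ has determinant $\roi_C(x)$ equipped with the canonical trivialization on $D$, corresponding to $[x]\in\Pic(C,D)$. Hence the cycle class map is the inverse of the determinant isomorphism, and is itself an isomorphism when $D\neq 0$. When $D=0$ the long exact sequence degenerates to $K_0(C,0)=K_0(C)=\bb Z\oplus\Pic(C)$, and the cycle class map $\CH_0(C)=\Pic(C)\into K_0(C)$ is the inclusion of the determinant summand, giving injectivity with cokernel $\bb Z$. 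The main obstacle I anticipate is constructing the determinant on relative $K_0$ compatibly with the exact sequences; this is cleanest in the perfect-complex model and reduces to verifying that the connecting map $K_1(D)\to K_0(C,D)$ matches the classical boundary $\Gamma(D,\roi_D^\times)\to\Pic(C,D)$ under the identification $K_1(D)=\Gamma(\roi_D^\times)$.
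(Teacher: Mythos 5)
Your argument is correct in outline but takes a genuinely different route from the paper's. For well-definedness you use a localization sequence with supports on $Z=|\op{div}(f)|$ (indeed disjoint from $|D|$ since $f\equiv1$ mod $D$ forces $\op{ord}_x(f)=0$ on $|D|$); the paper never isolates this step, since it falls out of its identification of $K_0(C,D)$ with a relative Picard group. For that identification, the paper runs the Zariski descent spectral sequence for the relative $K$-theory sheaves, which degenerates in dimension one to $0\to H^1(C,\cal K_{1,(C,D)})\to K_0(C,D)\to H^0(C,\cal K_{0,(C,D)})\to 0$, shows $\cal K_{1,(C,D)}\cong\ker(\roi_C^\times\to\roi_D^\times)$ and that the $H^0$ term vanishes for $D\neq0$, and then computes $H^1$ by an explicit flasque (Gersten) resolution. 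Your route via the long exact sequence of the pair and the five lemma reaches the same group $\Pic(C,D)$, but at the cost of the two steps you partly gloss over: constructing the relative determinant $K_0(C,D)\to\Pic(C,D)$ and checking that the connecting map $K_1(D)\to K_0(C,D)$ matches the classical boundary $\Gamma(D,\roi_D^\times)\to\Pic(C,D)$ --- you rightly flag this as the main remaining obstacle, and it is a genuine (if standard) verification; and the identification of $\Pic(C,D)$ with the divisor-theoretic group $\CH_0(C;D)$, which you dismiss ``by inspection'' but which is exactly where the paper's Gersten-resolution argument does its work. Two harmless slips: the long exact sequence does not end in $K_0(D)\to 0$ (the image of $K_0(C)\to K_0(D)$ is only the diagonal copy of $\bb Z$ in $\bigoplus_{x\in|D|}\bb Z$ when $|D|$ has several points), though you only use exactness at $K_1(D)$, $K_0(C,D)$ and $K_0(C)$; and one should record that the class of $\roi_x$ produced by the localization sequence agrees with the Coombes-style cycle class $[x]$ used throughout the paper. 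On balance, the paper's spectral-sequence route avoids ever needing a determinant on relative $K_0$, while yours makes well-definedness transparent and exhibits $K_0(C,D)$ as a relative Picard group in a very concrete form.
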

\begin{proof}
The Zariski descent spectral sequence for the $K$-theory of $C$ relative to $D$ degenerates to short exact sequences, since $\dim C=1$, yielding in particular \[0\To H^1(C,\cal K_{1,(C,D)})\To K_0(C,D)\To H^0(C,\cal K_{0,(C,D)})\To 0.\] Here $\cal K_{i,(C,D)}$ is by definition the Zariski sheafification on $C$ of the presheaf $U\mapsto K_i(U,U\times_CD)$.

To describe these terms further we make some standard comments about the long exact sequence of sheaves \[\cal K_{2,C}\to\cal K_{2,D}\to\cal K_{1,(C,D)}\to\cal K_{1,C}\to\cal K_{1,D}\to\cal K_{0,(C,D)}\to \cal K_{0,C}\to \cal K_{0,D}.\] Firstly, $\cal K_{1,C}\cong\roi_C^\times$ and $\cal K_{1,D}\cong \roi_D^\times$, so the map $\cal K_{1,C}\to\cal K_{1,D}$ is surjective; moreover, the sheaves $\cal K_{2,C}$ and $\cal K_{2,D}$ are generated by symbols, and so the map $\cal K_{2,C}\to\cal K_{2,D}$ is also surjective. It follows that $\cal K_{1,(C,D)}\cong\ker(\roi_C^\times\to\roi_D^\times)=:\roi_{(C,D)}^\times$ and that $H^0(C,\cal K_{0,(C,D)})=\ker(H^0(C,\cal K_{0,C})\to H^0(D,\cal K_{0,D}))$. Secondly, $\cal K_{0,C}\cong\bb Z$ via the rank map, and so $H^0(C,\cal K_{0,C})\cong\bb Z$; similarly, $H^0(D,\cal K_{0,D})\cong\bigoplus_{x\in |D|}\bb Z$ via the rank map. If $D\neq 0$, we deduce that the map $H^0(C,\cal K_{0,C})\to H^0(D,\cal K_{0,D})$ is injective and so $H^0(C,\cal K_{0,(C,D)})=0$; while if $D=0$ then evidently $H^0(X,\cal K_{0,(C,D)})=H^0(X,\cal K_{0,C})\cong\bb Z$.

In conclusion, it remains only to construct the cycle class isomorphism \[\CH_0(C;D)\Isoto H^1(C,\roi_{(C,D)}^\times).\] We will do this via a standard Gersten resolution.

Given an open subscheme $U\subseteq C$ containing $|D|$, let $j_U:U\to C$ denote the open inclusion. Then the canonical map $\roi_{(C,D)}^\times\to j_{U*}j_U^*\roi_{C,D}^\times$ fits into an exact sequence of sheaves \[0\To\roi_{(C,D)}^\times\To j_{U*}j_U^*\roi_{(C,D)}^\times\xto{(\op{ord}_x)_x}\bigoplus_{x\in C\setminus U}i_{x*}\bb Z\To0,\] where $i_{x*}\bb Z$ is a skyscraper sheaf at the closed point $x$. This remains exact after taking the filtered colimit over all open $U$ containing $|D|$, yielding \[0\To\roi_{(C,D)}^\times\To k(C)_D^\times \xto{(\op{ord}_x)_x}\bigoplus_{x\in C_0\setminus D}i_{x*}\bb Z\To0,\] where $k(C)_D^\times$ denotes a constant sheaf by abuse of notation. This latter sequence is a flasque resolution of $\roi_{(C,D)}^\times$, and using it to compute cohomology yields a natural isomorphism \[\op{coker}\big(k(C)_D^\times\xto{(\op{ord}_x)_x}\bigoplus_{x\in C_0\setminus D} \bb Z\big)\Isoto H^1(C,\roi_{(C,D)}^\times).\] But the left side of this isomorphism is precisely $\CH_0(C;D)$, thereby completing the proof. 
\end{proof}

\begin{proposition}\label{proposition_cycle_class_for_modulus}
Let $X$ be a smooth variety over a field $k$, and $D$ an effective divisor on $X$. Then the canonical map \[\mbox{\em free abelian group on closed points of }X\setminus D\;\To\; K_0(X,D),\quad x\Mapsto[x]\] descends to a cycle class homomorphism \[\CH_0(X;D)\To K_0(X,D).\]
\end{proposition}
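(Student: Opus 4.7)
The plan is to reduce the identity $(f)_C=0$ in $K_0(X,D)$ to the analogous identity on a smooth curve, where Lemma \ref{lemma_class_group_of_curve} already supplies the vanishing, and then transport this identity back to $X$ via a proper pushforward in relative $K$-theory.

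Given a curve $j:C\into X$ not contained in $|D|$ and some $f\in k(C)_D^\times$, I would first pass to the normalisation $\tilde C$ and let $\phi:\tilde C\to C\into X$ denote the resulting composition. Set $D_{\tilde C}:=\phi^{-1}(D)$, which is an effective Cartier divisor on $\tilde C$ because $C$ is not contained in $|D|$. By the very definition $k(C)_D^\times=k(\tilde C)^\times_{D_{\tilde C}}$, so Lemma \ref{lemma_class_group_of_curve} applied to $(\tilde C,D_{\tilde C})$ yields
\[\sum_{\tilde x\in \tilde C_0}\op{ord}_{\tilde x}(f)\,[\tilde x]=0\quad\text{in }K_0(\tilde C,D_{\tilde C}).\]

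The next step is to construct a pushforward $\phi_*:K(\tilde C,D_{\tilde C})\to K(X,D)$. The morphism $\phi$ is finite, hence proper; because $\tilde C$ and $X$ are both regular, $\phi$ is of finite Tor dimension; and the fibre square
\[\xymatrix{
D_{\tilde C}\ar[r]\ar[d]&\tilde C\ar[d]^\phi\\
D\ar[r]&X
}\]
is Tor-independent, since the local equation of $D$ is a non-zero-divisor on $X$ which remains a non-zero-divisor on $\tilde C$. Combining the absolute pushforward $\phi_*:K(\tilde C)\to K(X)$ with the projection formula \cite[Prop.~3.18]{Thomason1990} attached to this square, in the same manner as the construction recalled in Section \ref{subsection_LW}, yields the required pushforward on relative spectra. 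At the level of $K_0$ it sends the cycle class of a closed point $\tilde x\in\tilde C\setminus D_{\tilde C}$ to $[k(\tilde x):k(\phi(\tilde x))]\cdot[\phi(\tilde x)]$, since $\phi_*\roi_{\tilde x}$ is the skyscraper sheaf on $X$ at $\phi(\tilde x)$ with stalk of $\roi_X$-length $[k(\tilde x):k(\phi(\tilde x))]$.

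Applying $\phi_*$ to the vanishing identity and regrouping the resulting sum via the classical norm formula $\op{ord}_y^C(f)=\sum_{\phi(\tilde x)=y}[k(\tilde x):k(y)]\,\op{ord}_{\tilde x}(f)$ for the finite birational map $\tilde C\to C$ of curves then produces exactly $\sum_{y\in C_0}\op{ord}_y(f)\,[y]=(f)_C$, whence $(f)_C=0$ in $K_0(X,D)$, as desired. I expect the principal technical obstacle to be not any of the above cycle-theoretic manipulations but rather the careful construction of the relative pushforward $\phi_*$ together with the verification that it is compatible with Coombes' choice of null-homotopies defining cycle classes; this amounts to tracing functoriality through the Tor-independent square displayed above and does not present a deep structural difficulty.
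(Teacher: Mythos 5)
Your proposal follows essentially the same route as the paper: pass to the normalisation $\phi:\tilde C\to X$, check Tor-independence of the square over $D$ via the non-zero-divisor defining $D$, use the projection formula of Thomason--Trobaugh to get a relative pushforward $\phi_*:K(\tilde C,\phi^*D)\to K(X,D)$, and push forward the vanishing supplied by Lemma \ref{lemma_class_group_of_curve}. If anything, your final bookkeeping (tracking residue-field degrees via $\phi_*[\tilde x]=[k(\tilde x):k(\phi(\tilde x))]\,[\phi(\tilde x)]$ and the norm formula for $\op{ord}$) is more careful than the paper's, which states $\phi_*[x]=[\phi(x)]$ without the degree factor.
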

\begin{proof}
We must show that if $C\into X$ is a curve not contained in $|D|$ and $f\in k(C)_D^\times$, then $\sum_{x\in C_0}\op{ord}_x(f)[x]=0$ in $K_0(X,D)$. We will deduce this from Lemma \ref{lemma_class_group_of_curve} once we have  verified a suitable pushforward formalism.

Let $\phi:\tilde C\to C\into X$ be the resulting map from the normalisation $\tilde C$ to $X$, and consider the following pullback square:
\[\xymatrix{
\phi^*D \ar[d]_{\phi'}\ar[r]^{j'} & \tilde C \ar[d]^\phi\\
D \ar[r]_j & X
}\]
We claim that $\phi$ and $j$ are Tor-independent; that is, if $y$ is a closed point of $\tilde C$ such that $x:=\phi(y)$ lies in $|D|$, we must show that $\Tor_{\roi_{X,x}}^i(\roi_{D,x},\roi_{\tilde C,y})=0$ for all $i>0$. But since $D$ is an effective Cartier divisor, there exists a non-zero-divisor $t\in \roi_{X,x}$ such that $\roi_{D,x}=\roi_{X,x}/t\roi_{X,x}$; thus the only possible non-zero higher Tor is $\Tor^1$, which equals the $\phi^*(t)$-torsion of $\roi_{\tilde C,y}$; this could only be non-zero if $\phi^*(t)=0$ in $\roi_{\tilde C,y}$, but this would contradict the condition that $C$ does not lie in $|D|$. This proves the desired Tor-independence.

Moreover, $\phi$ is a finite morphism and $X$ is assumed to be smooth, whence $\phi$ is proper and of finite Tor-dimension. Therefore the projection formula \cite[Prop.~3.18]{Thomason1990} (or \cite[Thm.~4.4]{Coombes1982}) states that the diagram
\[\xymatrix{
K(\tilde C) \ar[d]_{\phi_*}\ar[r]^{j'^*} & K(\phi^*D) \ar[d]^{\phi'_*}\\
K(X) \ar[r]_{j^*} & K(D)
}\]
is well-defined and commutes up to homotopy; so there is an induced pushforward map \[\phi_*:K(\tilde C,\phi^*D)\To K(X,D),\] which by functoriality of pushforwards (as in Section \ref{subsection_LW} we must appeal to \cite[\S4--5]{Coombes1982} to know that the obvious choices of homotopies yield a functorial construction) satisfies $\phi_*[x]=[\phi(x)]$ for any $x\in \tilde C_0$. Therefore
\begin{align*}
\sum_{x\in C_0}\op{ord}_x(f)[x]
	&=\sum_{x\in\tilde C_0}\op{ord}_{\phi(x)}(f)[\phi(x)]\\
	&=\phi_*\big(\sum_{x\in\tilde C_0}\op{ord}_x(f)[x]\big)\\
	&=\phi_*(0)\\
	&=0,
\end{align*}
where $\sum_{x\in\tilde C_0}\op{ord}_x(f)[x]\in K_0(\tilde C,\phi^*D)$ vanishes by Lemma \ref{lemma_class_group_of_curve}.
\end{proof}

\begin{remark}
F.~Binda  \cite{Binda2014} has independently proved Proposition \ref{proposition_cycle_class_for_modulus}, as well as constructing cycle class homomorphisms $\CH_0(X;D;n)\to K_n(X,D)$ for the higher Chow groups with modulus.
\end{remark}

Let $X$ be a $d$-dimensional, smooth variety over $k$. Given effective divisors $D'\ge D$ with the same support, the inclusions $k(C)_{D'}^\times\subseteq k(C)_D^\times$ induce a canonical surjection $\CH_0(X;D')\onto\CH_0(X;D)$. This applies in particular when $D'=rD$ is a thickening of $D$. Combining this observation with Proposition \ref{proposition_cycle_class_for_modulus} we obtain a commutative diagram of inverse systems of Chow groups and relative $K$-groups (recall the definition of $F^dK_0$ from Section \ref{subsection_LW}) in which all maps are surjective (since every group is generated by the closed points of $X\setminus D$):
\[\xymatrix{
F^dK_0(X,D) & F^dK_0(X,2D)\ar@{->>}[l] & F^dK_0(X,3D)\ar@{->>}[l] & F^dK_0(X,4D)\ar@{->>}[l] & \cdots\ar@{->>}[l]\\
\CH_0(X;D)\ar@{->>}[u] & \CH_0(X;2D)\ar@{->>}[l]\ar@{->>}[u] & \CH_0(X;3D)\ar@{->>}[l]\ar@{->>}[u] & \CH_0(X;4D)\ar@{->>}[l]\ar@{->>}[u] & \cdots\ar@{->>}[l]\\
&&\CH_0(X;|D|)\ar@{->>}[ull]\ar@{->>}[ul]\ar@{->>}[u]\ar@{->>}[ur]\ar@{-->>}[urr]&
}\]

There are two natural questions to consider concerning this diagram. Firstly, a question seemingly related to a conjecture of Kerz and Saito \cite[Qu.~V]{KerzSaito2013} is whether the cycle class homomorphism \[\{\CH_0(X;rD)\}_r\To\{F^dK_0(X;rD)\}_r\] is an isomorphism of pro abelian groups, perhaps at least ignoring $(d-1)!$-torsion.

Secondly, changing notation, now suppose that $X'\to X$ is a desingularisation of an integral variety $X$, whose exceptional fibre is an effective Cartier divisor $D$. Then, as a Chow-theoretic analogue of the Bloch--Srinivas conjecture, we ask whether the inverse system \[\CH_0(X';D)\longleftarrow \CH_0(X';2D)\longleftarrow \CH_0(X';3D)\longleftarrow\cdots\] eventually stabilises, with stable value most likely equal to the Levine--Weibel Chow group $\CH_0(X)$ of $X$.

The following theorem simultaneously answers cases of these two questions, working under almost identical hypotheses to Corollary \ref{corollary_Chow_isom}:

\begin{theorem}\label{theorem_modulus}
Let $X$ be a $d$-dimensional, integral, quasi-projective variety over a good field $k$; let $\pi:X'\to X$ be a desingularisation, and $D$ any effective Cartier divisor on $X$ whose support contains the exceptional set. Assume $\op{codim}(X,\pi(|D|))\ge2$ and that the cycle class map $\CH_0(X)\to F^dK_0(X)$ is an isomorphism.

Then $\CH_0(X)\cong\CH_0(X';|D|)$, and the canonical maps \[\CH_0(X';|D|)\To\CH_0(X';rD)\To F^dK_0(X';rD)\] are isomorphisms for $r\gg1$.
\end{theorem}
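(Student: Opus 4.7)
The plan is to reduce the theorem to an application of Corollary~\ref{corollary_Chow_isom} by replacing the non-reduced Cartier divisor $D$ with its reduced subscheme $E := |D|_\sub{red}\into X'$, and then to conclude by a short diagram chase. Throughout, set $Y := \pi(|D|)_\sub{red}$; by hypothesis $\op{codim}(X,Y)\ge 2$, and $Y\supseteq X_\sub{sing}$ because $\pi$ restricts to an isomorphism off the exceptional set onto an image necessarily contained in $X_\sub{reg}$ (as $X'$ is smooth).

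First I would identify $\CH_0(X)\cong\CH_0(X';|D|)$. Remark~\ref{remark_Levine_Weibel}(ii) gives $\CH_0(X)=\CH_0(X;X_\sub{sing})\cong\CH_0(X;|Y|)$ since $\op{codim}(X,Y)\ge 2$. On the other hand, $\pi$ restricts to an isomorphism $X'\setminus|D|\isoto X\setminus|Y|$: any $x'\in X'\setminus|D|$ lies off the exceptional set, so $\pi^{-1}(\pi(x'))=\{x'\}$ forces $\pi(x')\notin|Y|$, and the inverse assignment is equally elementary. Remark~\ref{remark_Levine_Weibel}(iv) then yields $\CH_0(X;|Y|)\cong\CH_0(X';|D|)$, and composing gives the first assertion of the theorem.

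Next I would apply Corollary~\ref{corollary_Chow_isom} to $E$, which covers the exceptional set and satisfies $\op{codim}(X,\pi(|E|))=\op{codim}(X,Y)\ge 2$, to obtain $\BS_r:F^dK_0(X',rE)\isoto F^dK_0(X)$ for $r\gg 1$. To upgrade this to thickenings of $D$, observe that $\cal I_D\subseteq\cal I_E=\sqrt{\cal I_D}$, and, since $X'$ is Noetherian, $\cal I_E^N\subseteq\cal I_D$ for some $N\ge 1$. These inclusions yield closed embeddings $E\into rD\into (rN)E$ in $X'$ for each $r\ge 1$, and hence by the functoriality of relative $K_0$ under closed embeddings of pairs a pro-isomorphism $\{F^dK_0(X',rD)\}_r\cong\{F^dK_0(X',rE)\}_r$; the $F^d$-subgroups correspond naturally because each is generated by cycle classes of the same closed points of the common open complement $X'\setminus|D|$. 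Combining, $F^dK_0(X',rD)\isoto F^dK_0(X)$ for $r\gg 1$.

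Finally, the chain
\[\CH_0(X';|D|)\onto\CH_0(X';rD)\onto F^dK_0(X';rD)\Isoto F^dK_0(X)\]
consists of two surjections from the diagram preceding the theorem together with the isomorphism just established. Under the identification of the second paragraph and the hypothesised cycle class isomorphism $\CH_0(X)\isoto F^dK_0(X)$, the total composition is itself an isomorphism, which forces each intermediate surjection to be an isomorphism for $r\gg 1$. The main obstacle is the pro-equivalence $\{F^dK_0(X',rE)\}_r\cong\{F^dK_0(X',rD)\}_r$ in the preceding paragraph, which is not supplied directly by the cited results; it requires only elementary nilpotence plus naturality of the $F^d$-filtration under the resulting transition maps, but is the one step where some care is needed.
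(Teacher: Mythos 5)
Your proof is correct and follows essentially the same route as the paper, which assembles the identical ingredients (Remark \ref{remark_Levine_Weibel}(ii) and (iv), the hypothesised cycle class isomorphism, and Corollary \ref{corollary_Chow_isom}) into one commutative diagram and concludes by the same surjectivity chase. The only difference is that the paper cites Corollary \ref{corollary_Chow_isom} for $F^dK_0(X';rD)\to F^dK_0(X)$ without comment, whereas you make explicit the interleaving $E\into rD\into (rN)E$ with $E=|D|_\sub{red}$ needed to pass from the reduced case to a possibly non-reduced $D$ --- a detail the paper elides and which you handle correctly.
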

\begin{proof}
Let $Y\into X$ be the reduced closed subscheme with support $\pi(|D|)$; this has codimension $\ge 2$ and covers $X_\sub{sing}$. Consider the following commutative diagram, which exists for any $r\gg1$:
\[\xymatrix{
\CH_0(X';|D|) \ar[r] & \CH_0(X';rD) \ar[r] & F^dK_0(X';rD) \ar[d]^{\BS_r} \\
\CH_0(X;|Y|) \ar[u] \ar[r] & \CH_0(X) \ar[r] & F^dK_0(X)
}\]
The bottom right horizontal arrow is an isomorphism by assumption; the bottom left horizontal arrow is an isomorphism by Remark \ref{remark_Levine_Weibel}(ii); the left vertical arrow is an isomorphism by Remark \ref{remark_Levine_Weibel}(iv); the right vertical arrow is an isomorphism by Corollary \ref{corollary_Chow_isom}. Since the two top horizontal arrows are surjective, it follows that they are isomorphisms.
\end{proof}

\begin{corollary}\label{corollary_modulus}
Let $X$ be a $d$-dimensional, integral variety over an algebraically closed field $k$ which has strong resolution of singularities; let $\pi:X'\to X$ be a desingularisation, and $D$ any effective Cartier divisor on $X$ whose support contains the exceptional set. Assume $\op{codim}(X,\pi(|D|))\ge2$ and that one of the following is true:
\begin{enumerate}\itemsep0pt
\item $X$ is projective and $\op{char}k=0$; or
\item $X$ is projective and $d\le\op{char}k\neq0$; or
\item $X$ is affine.
\end{enumerate}
Then $\CH_0(X)\cong\CH_0(X';|D|)$, and the canonical maps \[\CH_0(X';|D|)\To\CH_0(X';rD)\To F^dK_0(X';rD)\] are isomorphisms for $r\gg1$.
\end{corollary}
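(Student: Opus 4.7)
The plan is to deduce this corollary directly from Theorem \ref{theorem_modulus} by verifying its hypotheses in each of the three cases listed.

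First I would observe that the setup of Corollary \ref{corollary_modulus} almost matches that of Theorem \ref{theorem_modulus} verbatim: we are given a desingularisation $\pi:X'\to X$ of a $d$-dimensional integral variety with $\op{codim}(X,\pi(|D|))\ge 2$, and $D$ is an effective Cartier divisor covering the exceptional set. The only missing pieces are that (a) $k$ is a good field and $X$ is quasi-projective, and (b) the cycle class map $\CH_0(X)\to F^dK_0(X)$ is an isomorphism. For (a), an algebraically closed field is automatically infinite and perfect, and strong resolution of singularities is part of the hypothesis, so $k$ is good; moreover $X$ is projective in cases (i) and (ii) and affine in case (iii), hence quasi-projective in all three cases.

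For (b), the required isomorphism $\CH_0(X)\isoto F^dK_0(X)$ is precisely the content of Theorem \ref{theorem_Levine}(i)--(iii), whose hypotheses match cases (i)--(iii) of the present corollary word-for-word (recall that $X$ is integral and regular in codimension one, the latter being a consequence of $\op{codim}(X,\pi(|D|))\ge2$ together with the fact that $\pi$ is an isomorphism outside the exceptional set).

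Having verified both hypotheses, the conclusion of Theorem \ref{theorem_modulus} immediately gives $\CH_0(X)\cong\CH_0(X';|D|)$ and the isomorphisms $\CH_0(X';|D|)\to\CH_0(X';rD)\to F^dK_0(X';rD)$ for $r\gg 1$, which is exactly the desired statement. There is no substantial obstacle here; the corollary is essentially a bookkeeping exercise combining the two preceding results, and the main task is simply to check that the three listed situations fall under both Theorem \ref{theorem_modulus} and Theorem \ref{theorem_Levine}.
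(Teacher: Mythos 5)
Your proposal is correct and matches the paper's own proof, which likewise deduces the corollary by combining Theorem \ref{theorem_modulus} with the Levine--Srinivas results recalled in Theorem \ref{theorem_Levine}; your explicit verification of the hypotheses (goodness of $k$, quasi-projectivity, regularity in codimension one) just spells out what the paper leaves implicit.
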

\begin{proof}
This follows from Theorem \ref{theorem_modulus} and the results of Levine and Srinivas recalled in Theorem \ref{theorem_Levine}.
\end{proof}

\begin{remark}[Class field theory of singular varieties]\label{remark_normal_CFT}
In this remark we explain how the $\CH_0$ isomorphism of Theorem \ref{theorem_modulus} over a finite field $\bb F_q$ can be interpreted as part of an unramified class field theory for singular, projective varieties.

Let $X$ be a projective variety over $\bb F_q$ which is regular in codimension one; suppose that a desingularisation $\pi:X'\to X$ exists, that $D$ is an effective Cartier divisor on $X$ whose support contains the exceptional set, and that $\op{codim}(X,\pi(|D|))\ge2$. Write $U=X'\setminus D=X\setminus \pi(|D|)$.

The Kerz--Saito class group \cite{KerzSaito2013} of $U$ is $C(U):=\projlim_r\CH_0(X';rD)$, and their class field theory provides a reciprocity isomorphism $\smash{C(U)^0\isoto\pi_1^\sub{ab}(U)^0}$, where the superscripts $0$ denote degree-$0$ subgroups. Assuming that the conclusions of Theorem \ref{theorem_modulus} are true in this setting, we deduce that $C(U)=\CH_0(X';rD)\cong\CH_0(X)$ for $r\gg1$. They prove moreover that each group $\CH_0(X';rD)^0$ is finite.

In particular, this would prove finiteness of $\CH_0(X)^0$, which is known in the smooth case thanks to the unramified class field theory of S.~Bloch, K.~Kato and Saito, et al. It would also yield a reciprocity isomorphism \[\CH_0(X)^0\Isoto \pi_1^\sub{ab}(U)^0,\quad[x]\mapsto\op{Frob}_x\] However, since the canonical map $\pi_1^\sub{ab}(U)\to \pi_1^\sub{ab}(X)$ is surjective but generally not an isomorphism, we would obtain in general only a surjective reciprocity map \[\CH_0(X)^0\To \pi_1^\sub{ab}(X)^0,\] indicating that the Levine--Weibel Chow group $\CH_0(X)$ is not the correct class group for unramified class field theory of a singular variety.
\end{remark}

\begin{remark}[The case of surfaces]\label{remark_surfaces}
If $X$ is an integral, projective surface over $\bb F_q$ which is regular in codimension one, then we have actually proved the observations of Remark  \ref{remark_normal_CFT} unconditionally: $\CH_0(X)$ is isomorphic to the Kerz--Saito class group $C(X_\sub{reg})$, its degree-$0$ subgroup is finite, and there is a reciprocity isomorphism \[\CH_0(X)^0\Isoto\pi_1^\sub{ab}(X_\sub{reg})^0\] of finite groups. This was brought to the author's attention by \cite{Krishna2015}, in which Krisha reproduced the argument while being unaware of the present paper.

To prove this we must only check that Theorem \ref{theorem_modulus} is true for surfaces over finite fields. In fact, we will let $X$ be a $2$-dimensional, integral, quasi-projective variety over an {\em arbitrary} field $k$ which is regular in codimension one. Then $X$ admits a resolution of singularities $\pi:X'\to X$ with exceptional set equal to exactly $\pi^{-1}(X_\sub{sing})$; let $E:=\pi^{-1}(X_\sub{sing})_\sub{red}$ and $Y:=(X_\sub{sing})_\sub{red}$.

Then Theorem \ref{theorem_BS} is true for the data $X'\to X$, $Y$, $E$. Indeed, it is only necessary to establish the isomorphism (\dag) occurring in the proof, which may be broken into the two isomorphisms
\[\{F^dK_0(X,rY)\}_r\isoto \{F^dK_0(\tilde X,\tilde X\times_XrY)\}_r\isoto\{F^dK_0(X',rE)\}_r,\]
where $\tilde X\to X$ denotes the normalisation of $X$. The second of these isomorphisms is due to Krishna and Srinivas \cite[Thm.~1.1]{Krishna2002}; the first isomorphism follows from the isomorphism $\{K_0(X,rY)\}_r\isoto \{K_0(\tilde X,\tilde X\times_XrY)\}_r$, which is a case of the author's pro-excision theorem \cite[Corol.~0.4 \& E.g.~2.5]{Morrow_pro_H_unitality}, and the obvious surjectivity just as in the proof of Theorem \ref{theorem_BS}.

Now assume further (perhaps after blowing-up $X'$ at finitely many points) that there is an effective divisor $D$ on $X'$ with support $\pi^{-1}(X_\sub{sing})$. Since the cycle class map $\CH_0(X)\to F^dK_0(X)$ is automatically an isomorphism (as we remarked immediately before Theorem \ref{theorem_Levine}), it follows that the assertions of Theorem \ref{theorem_modulus} are also true, as required: $\CH_0(X)\cong\CH_0(X';|D|)$, and the canonical maps $\CH_0(X';|D|)\to\CH_0(X';rD)\to F^dK_0(X';rD)$ are isomorphisms for $r\gg1$.
\end{remark}

%\bibliographystyle{acm}
%\bibliography{../Bibliography}

\begin{thebibliography}{10}

\bibitem{ArtinMazur1969}
{\sc Artin, M., and Mazur, B.}
\newblock {\em Etale homotopy}, vol.~100 of {\em Lecture Notes in Mathematics}.
\newblock Springer-Verlag, Berlin, 1986.
\newblock Reprint of the 1969 original.

\bibitem{BarbieriViale1992}
{\sc Barbieri~Viale, L.}
\newblock Zero-cycles on singular varieties: torsion and {B}loch's formula.
\newblock {\em J. Pure Appl. Algebra 78}, 1 (1992), 1--13.

\bibitem{Binda2014}
{\sc Binda, F.}
\newblock Algebraic cycles with modulus and relative {$K$}-theory.
\newblock {\em Preprint\/} (2014).

\bibitem{Coombes1982}
{\sc Coombes, K.~R.}
\newblock Relative algebraic {$K$}-theory.
\newblock {\em Invent. Math. 70}, 1 (1982/83), 13--25.
\newblock An appendix.

\bibitem{KerzSaito2013}
{\sc {Kerz}, M., and {Saito}, S.}
\newblock {Chow group of 0-cycles with modulus and higher dimensional class
  field theory}.
\newblock {\em {\tt arXiv:1304.4400}\/} (2013).

\bibitem{Krishna2006}
{\sc Krishna, A.}
\newblock Zero cycles on a threefold with isolated singularities.
\newblock {\em J. Reine Angew. Math. 594\/} (2006), 93--115.

\bibitem{Krishna2010}
{\sc Krishna, A.}
\newblock An {A}rtin-{R}ees theorem in {$K$}-theory and applications to zero
  cycles.
\newblock {\em J. Algebraic Geom. 19}, 3 (2010), 555--598.

\bibitem{Krishna2015}
{\sc Krishna, A.}
\newblock $0$-cycles on singular schemes and class field theory.
\newblock {\em {\tt arXiv:1502.01515}\/} (2015).

\bibitem{Krishna2002}
{\sc Krishna, A., and Srinivas, V.}
\newblock Zero-cycles and {$K$}-theory on normal surfaces.
\newblock {\em Ann. of Math. (2) 156}, 1 (2002), 155--195.

\bibitem{Levine1985b}
{\sc Levine, M.}
\newblock Zero-cycles and {$K$}-theory on singular varieties.
\newblock In {\em Algebraic geometry, {B}owdoin, 1985 ({B}runswick, {M}aine,
  1985)}, vol.~46 of {\em Proc. Sympos. Pure Math.} Amer. Math. Soc.,
  Providence, RI, 1987, pp.~451--462.

\bibitem{Levine1983}
{\sc Levine, M.}
\newblock A geometric theory of the chow ring of a singular variety.
\newblock {\em Unpublished preprint\/} (ca.~1983).

\bibitem{Levine1985a}
{\sc Levine, M., and Weibel, C.}
\newblock Zero cycles and complete intersections on singular varieties.
\newblock {\em J. Reine Angew. Math. 359\/} (1985), 106--120.

\bibitem{Morrow_pro_cdh_descent}
{\sc Morrow, M.}
\newblock Pro cdh-descent for cyclic homology and ${K}$-theory.
\newblock {\em J. Inst. Math. Jussieu, {\em to appear}\/}.

\bibitem{Morrow_pro_H_unitality}
{\sc Morrow, M.}
\newblock Pro unitality and pro excision in algebraic {$K$}-theory and cyclic
  homology.
\newblock {\em J. Reine Angew. Math., {\rm to appear}\/}.

\bibitem{Murthy1994}
{\sc Murthy, M.~P.}
\newblock Zero cycles and projective modules.
\newblock {\em Ann. of Math. (2) 140}, 2 (1994), 405--434.

\bibitem{Srinivas1985a}
{\sc Srinivas, V.}
\newblock Zero cycles on a singular surface. {II}.
\newblock {\em J. Reine Angew. Math. 362\/} (1985), 4--27.

\bibitem{Srinivas1987a}
{\sc Srinivas, V.}
\newblock Rational equivalence of {$0$}-cycles on normal varieties over
  {${\mathbb C}$}.
\newblock In {\em Algebraic geometry, {B}owdoin, 1985 ({B}runswick, {M}aine,
  1985)}, vol.~46 of {\em Proc. Sympos. Pure Math.} Amer. Math. Soc.,
  Providence, RI, 1987, pp.~475--482.

\bibitem{Srinivas1989}
{\sc Srinivas, V.}
\newblock Torsion {$0$}-cycles on affine varieties in characteristic {$p$}.
\newblock {\em J. Algebra 120}, 2 (1989), 428--432.

\bibitem{Thomason1990}
{\sc Thomason, R.~W., and Trobaugh, T.}
\newblock Higher algebraic {$K$}-theory of schemes and of derived categories.
\newblock In {\em The {G}rothendieck {F}estschrift, {V}ol.\ {III}}, vol.~88 of
  {\em Progr. Math.} Birkh\"auser Boston, Boston, MA, 1990, pp.~247--435.

\bibitem{Weibel2001}
{\sc Weibel, C.}
\newblock The negative {$K$}-theory of normal surfaces.
\newblock {\em Duke Math. J. 108}, 1 (2001), 1--35.

\end{thebibliography}

\def\cprime{$'$}

\noindent Matthew Morrow\hfill {\tt morrow@math.uni-bonn.de}\\
Mathematisches Institut\hfill {\tt http://www.math.uni-bonn.de/people/morrow/}\\\
Universit\"at Bonn\\
Endenicher Allee 60\\
53115 Bonn, Germany
\end{document}